\documentclass{article}
 \usepackage[T1,T2A]{fontenc}
\usepackage[unicode]{hyperref}
\usepackage{amsmath,amssymb,amsthm,orcidlink,booktabs,pdflscape}
\usepackage[margin=1in]{geometry}
\usepackage{CJKutf8}
\usepackage[final]{microtype}
\title{Acyclic Poset Multiplihedra and their Quotients}
\author{Stefan Forcey\textsuperscript{\orcidlink{0000-0003-2251-5710}}\footnote{Department of Mathematics, University of Akron, Akron, Ohio 44325, United States of America}\and Ross Glew\textsuperscript{\orcidlink{0000-0002-8100-928X}}\footnote{Department of Mathematics and Theoretical Physics, University of Hertfordshire, Hatfield, Hertfordshire\ \textsc{al10 9ab}, United Kingdom}\and Chris Tapo\textsuperscript{\orcidlink{0000-0000-0000-0000}}\footnotemark[1]
\\[1em]
\href{mailto:sforcey@uakron.edu}{\texttt{sforcey@uakron.edu}}
\hskip1em
\href{mailto:r.glew@herts.ac.uk}{\texttt{r.glew@herts.ac.uk}}
\hskip1em
\href{mailto:cet64@uakron.edu}{\texttt{cet64@uakron.edu}}
}
\usepackage[french,german,portuguese,italian,latin,russian,ukrainian,british]{babel}
\usepackage[osf]{newpxtext}
\usepackage[varbb,slantedGreek]{newpxmath}
\usepackage[artemisia]{textgreek}
\usepackage[utf8]{inputenc}
\usepackage{tikz-cd}
\usetikzlibrary{babel}
\newcommand{\precdot}{\prec\mathrel{\mkern-5mu}\mathrel{\cdot}}

\begin{document}
\maketitle
\newtheorem{theorem}{Theorem}
\newtheorem{lemma}{Lemma}
\newtheorem{conjecture}{Conjecture}
\newtheorem{corollary}{Corollary}
\theoremstyle{definition}
\newtheorem{defn}{Definition}
\theoremstyle{remark}
\newtheorem{example}{Example}
\newcommand\cyrillic[1]{\fontfamily{Domitian-TOsF}\selectfont \foreignlanguage{russian}{#1}}
\begin{abstract}
Two families of polytopes underlie the combinatorics of associative operations. Associahedra and multiplihedra respectively capture the information in the operation itself and in the morphisms that respect that operation. The first applications of these polytopes, from Stasheff, were for modeling homotopy associative spaces and their homotopy homomorphisms. Later, lax and weak higher categories used both as the shapes of commuting diagrams. More recently, the associahedra have been generalized to versions based on graphs, and then to acyclic versions based on posets: the acyclonestohedra. Meanwhile, the associated multiplihedra have also been generalized to graph associahedra and generalized permutohedra. Here we complete that picture: we use the graph multiplihedra to define and realize new acyclic poset multiplihedra for the acyclic poset associahedra. Quotients of these are also studied, concluding with the conjectured interval polytopes of the order polytopes of posets.  
\end{abstract}

\baselineskip=17pt

\begin{figure}[h]
    \centering
     \includegraphics[width=0.65\linewidth]{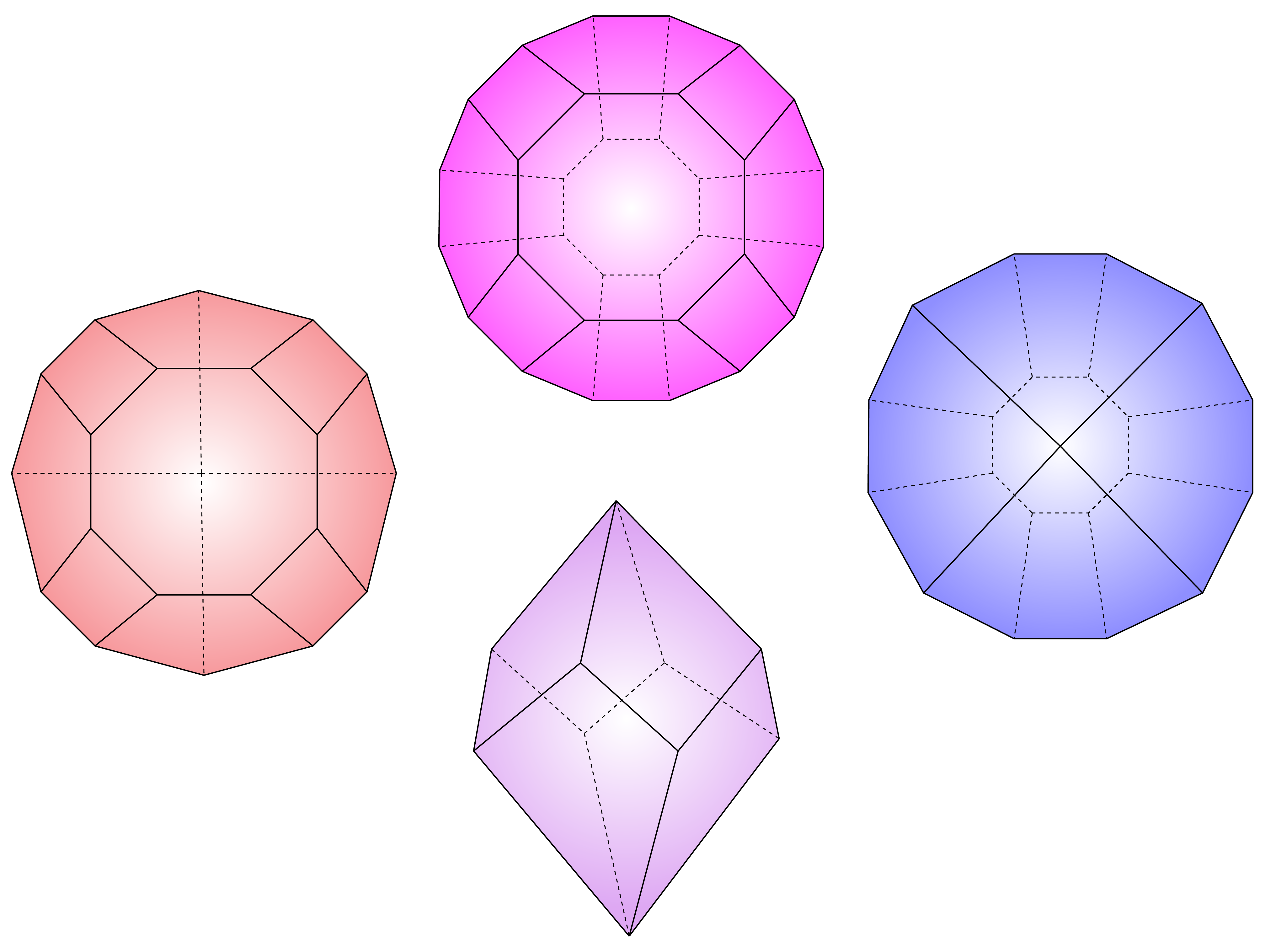}
    \caption{Cast of characters: the acyclic poset multiplihedron and its quotients for the bowtie poset. Figure~\ref{fig:bowtiemult} shows some of the face labels, while Table~\ref{tab:bow} gives a geometric realization of the multiplihedron.}
    \label{fig:topp}
\end{figure}

\section{Background and Overview}
\textit{Tubings, nestings, and pipings} are special subsets of a power set $\mathcal{P}(S)$ of a given \textit{ground} set $S$. The definitions of each begin with the elements of the power set they may contain: the \textit{tubes, nests, or pipes}. Often that initial collection of subsets $\mathcal{B}\subseteq \mathcal{P}(S)$ is called a \textit{building} set. The axioms for a building set $\mathcal{B}$ are: 1) the empty set is not a member of $\mathcal{B}$; 2) the building set  $\mathcal{B}$ must include all the singletons of $S$; and 3) for any two elements of $\mathcal{B}$, if they have nonempty intersection, then their union is also in $\mathcal{B}.$ In this paper we will deal with tubes of a graph as defined in \cite{cd}, which comprise a building set on the nodes of that graph. Pipes as defined in \cite{galashin} are special subposets of elements of a poset. Pipes do not make up a building set of the poset elements; but instead they are a special class of tubes of the line graph of the Hasse diagram of the poset. The term nest is used in \cite{laplante-anfossi} for what turn out to be  pipes, but for a poset that has a Hasse diagram that is a tree, or forest. Historically then: tubes of a graph were defined first, then nests and pipes of a poset were described independently, then nests were realized to be a specialization of pipes, specifically in \cite{defant2024operahedronlattices}, and finally, it was pointed out in \cite{mpp} and \cite{mppfull} that pipes and pipings correspond precisely to special versions of graph tubes and tubings. 

We will go over the definitions in the next section. But since all three sorts of subset are in the class of tubes of a graph, we will often call them tubes for ease of the discussion, leaving to the context what special sort of tube they are. Thus a tubing, nesting, or piping is a collection of tubes obeying the relevant rules. The first rule, obeyed by all three varieties, is that pairs of tubes in that collection must either be nested or disjoint: if they intersect then one must be a subset of the other. Thus the tubes in a tubing (or piping or nesting) are partially ordered by inclusion, and indeed the resulting poset formed by any tubing, nesting, or piping possesses a tree (or forest) as its Hasse diagram. If the elements of a tubing could overlap, that would allow a cycle in the underlying graph of the Hasse diagram.\footnote{Allowing overlap of tubes, up to further limitations, moves us into the realm of multitubings and multiassociahedra.}

The fact that the Hasse diagram of a tubing is always treelike allows certain superstructures to be added. First, the Hasse tree can be  \textit{painted} with colors (usually two main colors). The earliest examples of these superstructures are on the nestings of the linear poset (tubings of the path graph) which naturally label the faces of Stasheff's associahedra \cite{stasheff}. The 2-colored superstructure on these tubings make up the polytopes called multiplihedra \cite{forcey}. 2-colored structure is generalized to graph tubings in \cite{dev-forc}, and further to several additional classes of generalized permutohedra in \cite{doker}, where the results are called lifted generalized permutohedra. Multiplihedra are also seen as shuffles of the generalized permutohedra and generalized to $n$ colors in \cite{shuffle}. 
\begin{figure}
    \centering
    \includegraphics[width=\linewidth]{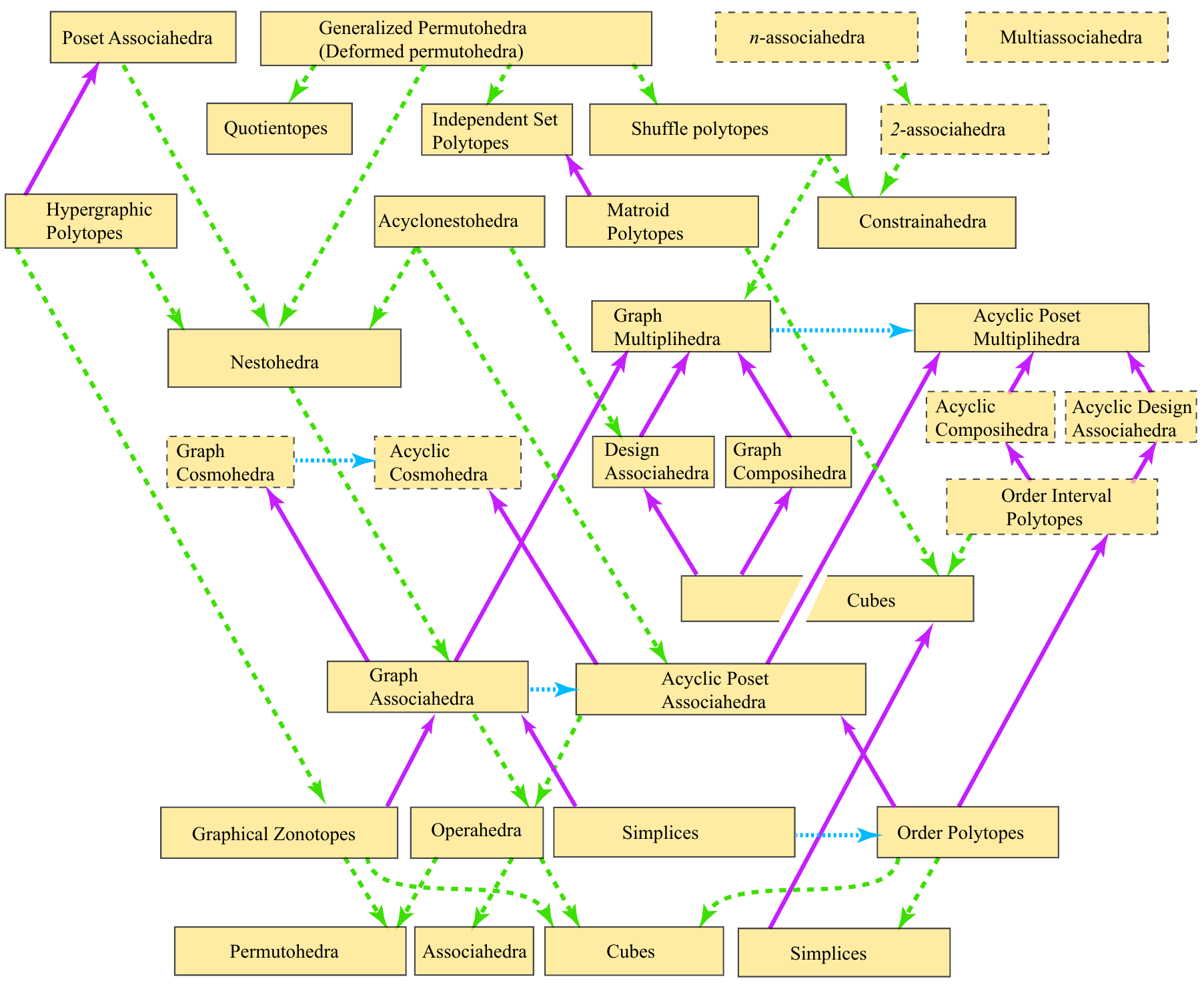}
    \caption{Relationships of families of polytopes: dashed green arrows slope down to indicate subset containment: lower contained in upper. Solid violet arrows point up to indicate lifting: refining the fan, truncation, taking intervals, or extending combinatorial structure. Dashed horizontal arrows show taking a cross section, left to right. Dashed boxes are conjectural polytopes. Many families and relationships were too hard to show here, such as the facts that interval hypergraphic polytopes include the associahedra, and that multiassociahedra include the associahedra, simplices, and certain kinds of amplituhedra. \\ $\text{~~~~~~~}$ For details on the relationships shown (and not shown) sources include: for hypergraphic polytopes \cite{berg, pilaud_berg, pilaud-sack}, for poset associahedra \cite{dfrs}, for generalized permutohedra and nestohedra \cite{postnikov}, for quotientopes \cite{quotient},  for graph multiplihedra and their quotients \cite{dev-forc}, for generalized multiplihedra, matroid polytopes and independent set polytopes \cite{doker}, for acyclonestohedra \cite{mppfull}, for operahedra \cite{laplante-anfossi, defant2024operahedronlattices}, for acyclic poset associahedra and order polytopes \cite{galashin}, for 2-associahedra and $n$-associahedra \cite{bottman, bottn}, for  shuffle polytopes and constrainahedra \cite{shuffle, bottc}, for cosmohedra and graph cosmohedra \cite{Arkani-Hamed:2024jbp,Glew:2025otn, ardilacosmo}, for acyclic cosmohedra \cite{forc-glew}, and for multiassociahedra \cite{multiassoc}. }
    \label{fig:polymapmess}
\end{figure}

The second type of superstructure, which we only briefly mention here, is when the graph of the Hasse diagram of each tubing can be used to define a new building set for an iterated construction of nesting: using pipes (also known as nests, since it is a tree.) These 2-fold iterations of the nestings of the path graph make up the cosmohedra, polytopes described in \cite{Arkani-Hamed:2024jbp} and \cite{ardilacosmo}, and generalized to all graphs and posets in \cite{Glew:2025otn} and \cite{forc-glew}. 

In this paper we focus on the first type, the colored superstructure. We describe a 2-colored version of the poset associahedra of Galashin. These latter were originally defined in \cite{galashin}, and that paper refers to them as $P$-associahedra, to differentiate them from the poset associahedra defined in \cite{dfrs}.  Realizations for Galashin's poset associahedra were found in \cite{sack}, \cite{mpp}, and \cite{mppfull}. In those three sources they are referred to as poset associahedra.  However, there is a clear distinction between the two types of poset associahedra: the pipes (and pipings) of Galashin's poset associahedra are convex (and \textit{acyclic}), while the original poset associahedra have tubes that are \textit{lower sets}, or order ideals, with graph associahedra as a special case. We will refer to Galashin's version as \textit{acyclic poset associahedra}, which is also appropriate considering that they are special \textit{acyclonestohedra}. \footnote{Neither sort of pipes or tubes on posets make a building set on the elements of the poset. In fact they both generalize the building set property in the same way: rather than satisfying the condition that for intersecting tubes their union is found in $\mathcal{B}$, they satisfy the condition that there exists a tube in $\mathcal{B}$ containing that union.} The polytopes we consider in this paper are mapped out in a much broader context in Figure~\ref{fig:polymapmess}.

\subsection{New polytopes}
In the next section we will review colored graph tubings, which have blue, purple and red tubes, and obey the rule that only blue tubes can be nested inside of blue or purple tubes. These colored tubings form a poset structure: the face poset of a polytope, the graph multiplihedron, as we will review in Sections~\ref{rev1} and~\ref{vert}. Then in Section~\ref{rev3} we review order polytopes and acyclic poset associahedra.
Our main result in Section~\ref{new} is that for any poset $P$ the acyclic colored tubings also constitute a polytope: that is, the poset of colored pipings of $P$ is equivalent to a face poset of a polytope, the acyclic poset multiplihedron.
The proof of this theorem, in Section~\ref{proofy}, will be simplified by the fact that we can rely on earlier results about the graph multiplihedron. Our new acyclic poset multiplihedra will be realized as cross sections of the graph multiplihedra, and inherit all the nested structure from the tubing structure of the latter. In Section~\ref{conjy} we define quotients on the the face poset of the acyclic poset multiplihedra, and show that the end result of these is to capture the intervals of the poset of faces of the order polytope. This leads to a conjecture that the quotients can actually be realized, and we present a candidate for this geometric realization which works in low dimensions.

\section {Review of graph associahedra, multiplihedra, and acyclonestohedra}
\label{rev}

For a connected graph $G$, a \textit{tube} $t$ is a connected induced subgraph, often given by its set of nodes. Tubes are drawn using closed curves around their nodes when the graph is small enough that the subgraph is easily seen to be thus enclosed. These tubes are uncolored for the graph associahedra, or colored (thick red, thin blue, or dashed purple) for the multiplihedra. 

A \textit{tubing} $T$ is a set of tubes of $G$, each pair nested or
disjoint, for which all unions of the tubes in $T$
are induced subgraphs. A \textit{colored tubing} is a tubing of colored tubes, obeying the rule that only blue tubes can be nested inside of blue or purple tubes.

The original motivation for two colors in the combinatorial structures of multiplihedra was to denote domain and range for a function or functor; blue for domain and red for range. We have used actual colors: red, blue, and purple lines---but the red is thick, blue thin, and purple dashed for printing in black and white. (The definitions from \cite{dev-forc} use the terms thick, thin, and broken for our red, blue, and purple, and refer to the colored tubing as a marked tubing.)

The definitions are subtle, and it can be helpful to note some consequences. For instance the rule that unions must be induced subgraphs means that two tubes $s,t$ both in a tubing $T$ cannot have tube $s$ contain one node of a graph edge while a disjoint tube $t$ contains the other. The rule that only blue tubes can be nested inside of non-red tubes means that purple tubes never have purple tubes nested inside them. 

For a graph $G$ with multiple connected components, we sometimes add that the entire graph $G$ is still considered a tube (despite being disconnected) and thus that the connected components of $G$ cannot all be members of a tubing at once. Alternatively for disconnected graphs we could define the tubings to be any collection of tubings on the connected components; this gives a Cartesian product of tubings on the components. For the rest of this paper we will restrict our attention to the connected graphs, leaving the choice of how to describe disconnected graphs up to practitioner. Let the connected graph $G$ have $n$ nodes.
\begin{defn}
The \textit{graph associahedron} $\mathcal{K}(G)$ is defined in \cite{cd} as the $(n-1)$-dimensional polytope whose face poset corresponds precisely to the uncolored graph tubings, ordered by reverse inclusion. 
\end{defn}
\begin{defn}\label{multdef}
    The \textit{graph multiplihedron} $\mathcal{J}(G)$ is defined in \cite{dev-forc} as the $n$-dimensional polytope whose face poset  corresponds to all the valid colored tubings of a graph. The ordering is given by the following four covering relations $U\precdot V$: tubing $U$ is less than (covered by) tubing $V$ if:
\end{defn}

\begin{enumerate}
    \item $U$ is achieved by adding a single blue tube to $V$, which must be added inside another blue or purple tube. 
    \item $U$ is achieved by adding a single red tube to $V$, which must be added inside another red tube.
    \item $U$ is achieved from $V$ by \textit{resolving} a purple tube of $V$, by making it blue or red.
    \item $U$ is achieved from $V$ by resolving a purple tube of $V$ to red, and simultaneously adding one or more purple tubes just inside of that new red tube (and not inside any other tube).
\end{enumerate}

\begin{figure}
    \centering
    \includegraphics[width=.75\linewidth]{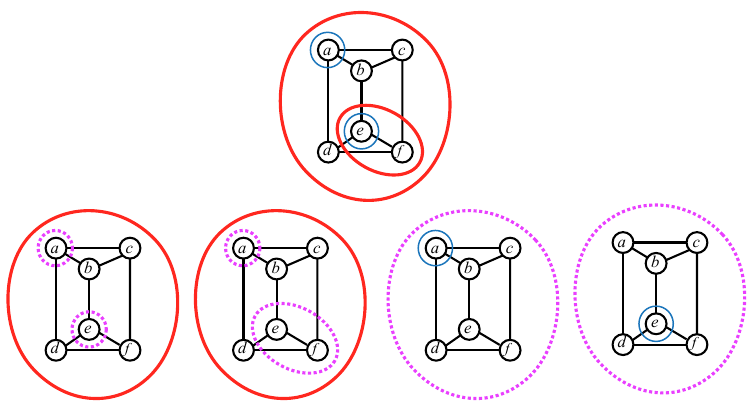}
    \caption{A colored tubing for a 2D face of a 6D graph multiplihedron, top, and four of the 5D facets that contain that 2D face. All four of these facet tubings are induced by acyclic pipings of the double-claw poset in Figure~\ref{fig:runonecorre}. Vertices of the graph multiplihedron of this 2D face are shown in Figure~\ref{fig:runonenum}.} 
    \label{fig:runonefacet}
\end{figure}

Relations in the poset of colored tubings thus generated can be difficult to determine. Simply being subsets of the tubes (of each color) does not make one colored tubing comparable to another. For instance, starting with a single universal red tube, the only tubings less than that one must have all red tubes, since there is no way to legally add blue or purple tubes. The same is true for the single universal blue tube: and these two examples are facets of the resulting polytope that are in fact equivalent to the graph associahedron for the same graph. 

 The minimal elements are those tubings with a maximal number of tubes, some red and some blue tubes, no purple. The maximal number of tubes is always $n$, the number of nodes.     Thus we can associate an element of the poset with the maximal nestings that it can be extended to become, so that we have $U\prec V$ when the  $n$-tube tubings that $U$ can become is a subset of those that $V$ can become. The maximal element of the entire poset is the tubing with one universal purple nest. Then by adding a single blue tube inside it, or resolving it to blue, or by resolving it to red,  and perhaps adding any allowable purple tubes inside it, we see the penultimate elements of the poset which correspond to facets of our face poset. 

A unifying principle behind the relations we have listed is that they describe the resolution of a colored tree into a binary tree, with simplest coloring.  Ardila and Doker point out in \cite{doker} that the Hasse diagram of a tubing, a tree, can have its edges painted with two colors. Red at the root, and blue at the leaves, the rule for painting is that blue must be above red: purple tubes correspond to nodes where red and blue meet at a branch point of the tree. Then the covering relations can be expressed as resolving the tree until it is binary with no bi-colored branch points. 

\subsection{Multiplihedron facet realization}\label{rev1} Let $G$ be a connected graph with vertices $v_1,\dots,v_n.$ It is shown in \cite{dev-forc} that $\mathcal{J}(G)$ is the poset of faces of an $n$-dimensional convex polytope with vertices corresponding to the colored tubings of $n$ tubes, where $n$ is the number of nodes of $G.$
In \cite{dev-forc} is given a geometric realization of the graph multiplihedron  for any graph $G$ with $n$ nodes. The polytope is full dimensional, in $\mathbb{R}^n$ where coordinate $x_i$ corresponds to node $v_i$ of $G$. The facet inequalities for the polytope come in two flavors:
\begin{enumerate}
    \item For each colored tubing of $G$ with only one blue tube $t$, a lower facet given by $$\sum_{v_i\in t} x_i \ge 3^{|t|-1} .$$
    \item For each colored tubing of $G$ with only one red tube $G$ itself, and (possibly) $k$ purple tubes $t_1\dots t_k$, an upper facet given by: $$\sum_{v_i\notin \cup t_j}x_i \le 3^n-\sum_{j=1}^k3^{|t_j|}.$$
\end{enumerate}

To see examples, consider the four facet tubings in Figure~\ref{fig:runonefacet}. The variables $x_a,x_b,\dots,x_f$ correspond to the nodes. The first two tubings yield upper inequalities $x_b+x_c+x_d+x_f \le 729-3-3$ and $x_b+x_c+x_d \le 729-3-9.$ The second two yield lower inequalities $x_a \ge 1$ and $x_e \ge 1.$ 

\subsection{Multiplihedron vertices}\label{vert} In \cite{dev-forc} it is shown that the vertices of this polytope realization of the graph multiplihedron are given by the following assignment of values to the coordinates $x_i$ for an $n$-tube tubing $T$ with all red and blue tubes (thus maximal as a tubing, so minimal in the poset of tubings). For a node $v_i$ of $G$ there will be a tube $\epsilon(v_i)$ that is the smallest containing $v_i$. For any tube $t$ there may be some tubes $s$ that are nested inside $t$ and inside no other tube: we say $s\precdot t$ in the containment order. 
\begin{enumerate}
    \item 
 For $\epsilon(v_i)$ a blue tube we have:
$$x_i= 3^{|\epsilon(v_i)-1|} - \sum_{s\precdot \epsilon(v_i)}3^{|s|-1}.$$
\item For $\epsilon(v_i)$ a red tube we have an extra factor of 3:
$$x_i= 3^{|\epsilon(v_i)|} - \sum_{s\precdot \epsilon(v_i)}3^{|s|}.$$
\end{enumerate}

Example calculations of some coordinates associated to nodes of a graph with a maximal colored tubing are shown in Figure~\ref{fig:runonenum}. Notice how these coordinate values obey the four facet inequalities (as equalities) from the facet tubings in Figure~\ref{fig:runonefacet}.

\subsection{Review of order polytopes and acyclic poset associahedra}\label{rev3}

We will work with connected posets $P,$ which means that the Hasse diagram of $P$ is a connected graph. A convex subposet is a subposet $C$ such that if $a\in C$ and $b\in C$, with $a\le x \le b$, then $x\in C.$ We often call a convex subposet \textit{acyclic}, since a non-convex subposet has the property that collapsing it to a single element in the Hasse diagram results in the remaining directed edges of the Hasse diagram forming a directed cycle, after the collapse. Similarly, we say a collection of subposets is acyclic if simultaneously collapsing all subposets in the collection results in no directed cycles.

\begin{defn}
    We consider partitions of the $n$ elements of the connected poset $P$ into connected parts. These connected-part partitions which are acyclic (and thus must have acyclic parts) label the faces of an $(n-2)$-dimensional polytope called the \textit{order polytope} $\mathcal{O}(P).$ The face poset of acyclic partitions is ordered by reverse refinement, with the partition into all singletons as the interior of the polytope and the trivial partition not included as a face: it can be considered the minimal element in the face lattice.
\end{defn}
 Figures~\ref{fig:trunc1} and ~\ref{fig:newby1real} show examples. $\mathcal{O}(P)$ was first defined by Stanley for posets with unique maximum and minimum elements in \cite{stanley}, and more generally for all posets by Galashin in \cite{galashin}.  Galashin's order polytopes are found combinatorially as faces of Stanley's, via adjoining a max and min if needed.
 Following Stanley, Galashin gives a facet realization for $\mathcal{O}(P)$  for real variables $x_i$ indexed by the $n$ elements $i\in P$, with  
 \begin{enumerate}
     \item facet inequalities: $x_i\le x_j$ for each covering relation $i\precdot j$ in $P$,
     \item equalities: $\displaystyle{\sum_{i\in P}x_i = 0}$ and  $\displaystyle{\sum_{i \precdot j}(x_j-x_i) = 1.}$
 \end{enumerate}
 Alternatively, there is a realization for $\mathcal{O}(P)$  implied in \cite[Sec. 8.2]{mppfull} for  variables $x_{e}$ indexed by the $|E|$ edges of the Hasse diagram of $P$ (covering relations of $P$), as a cross section of a simplex. 
 \begin{enumerate}
     \item inequalities: $x_{e} \ge 0$ for each edge $e$ of the Hasse diagram,
     \item equality: $\displaystyle{\sum x_{e} = 1.}$
     \item equalities for cycles $c$ in the underlying graph of the Hasse diagram of $P$:  $\displaystyle{\sum_{e \text{ with }c }x_e = \sum_{e \text{ against } c}x_e }.$  Choosing an arbitrary direction to traverse the cycle $c$, the orientation (covering relation) of the edge $e$ in $c$ will either agree with or be against the direction of traversal of $c.$
 \end{enumerate}


\begin{figure}
    \centering
    \includegraphics[width=\linewidth]{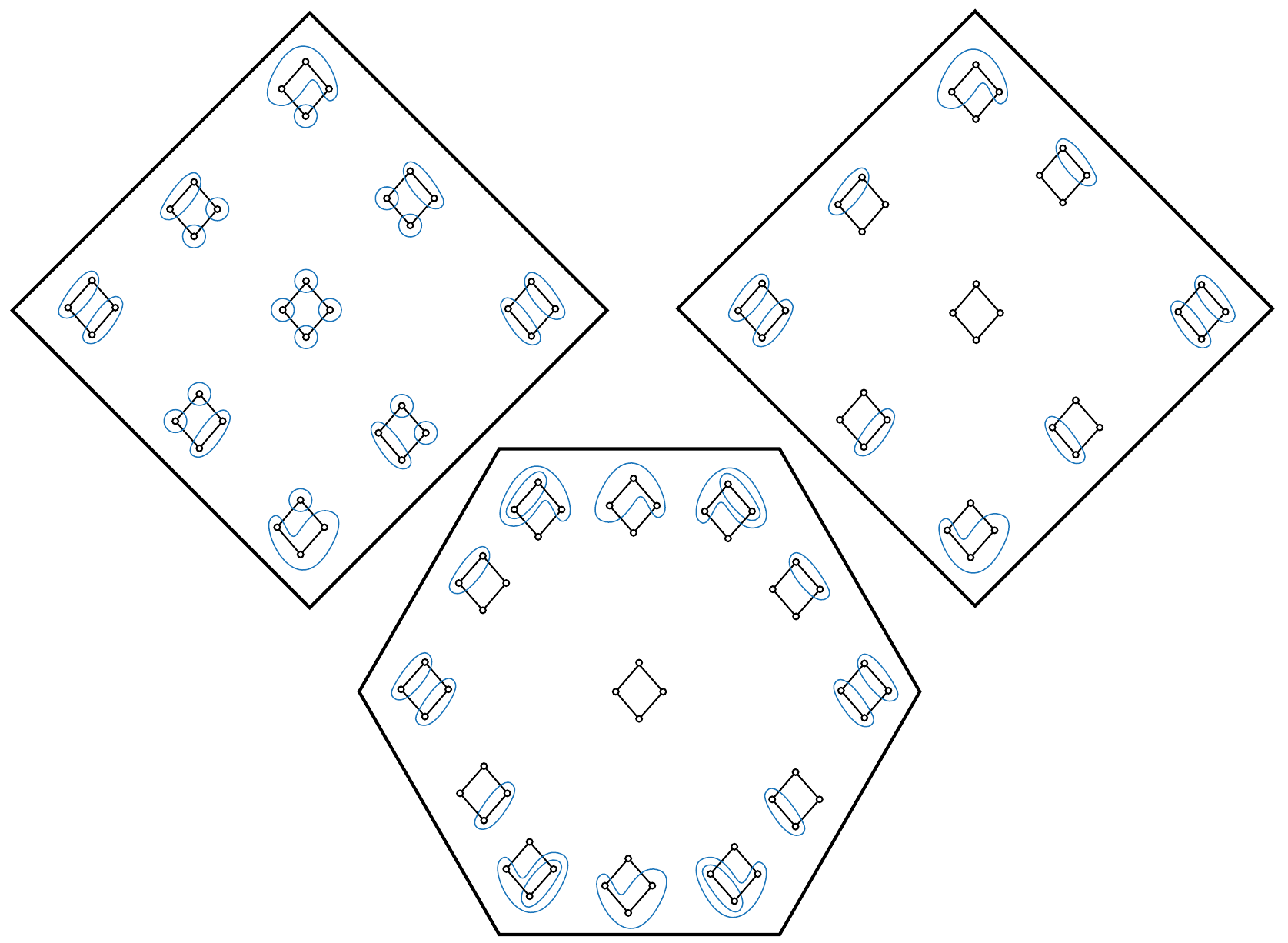}
    \caption{On the left is the order polytope $\mathcal{O}(P)$ for the diamond poset. On the right  is the same order polytope, showing only subposets of size greater than one. Two of the vertices are truncated, thus promoted to new facets of the acyclic poset associahedron in the center.}
    \label{fig:trunc1}
\end{figure}

For a connected poset $P$ we consider \textit{pipings} as described in \cite{galashin}.
A \textit{pipe} $t$ of $P$ is a connected convex subposet of $P$ with at least two nodes. That is the same as a connected convex subgraph of the Hasse diagram of $P$ with at least one edge. A piping $\mathcal{N}$ is a set of pipes such that each pair is nested or disjoint, and such that the collection of those pipes is also acyclic. Notice that unlike tubings, pipings can have adjacent pipes. Indeed the faces of the order polytope  $\mathcal{O}(P)$ are labeled by pipings: specifically the pipings that can be seen as partitions of the poset, with no nesting and from which we have discarded the singleton parts.

Keeping all the pipings of a poset, not just the partitions but also the nested pipings, corresponds to the following polytope defined in \cite{galashin}:
\begin{defn}
    For a poset $P$ the acyclic poset associahedron $\mathcal{A}(P)$ is an $(n-2)$-dimensional polytope whose face poset is the same as the collection of pipings of $P$, ordered by reverse inclusion.
\end{defn}
 See Figure~\ref{fig:trunc1} for the full poset of pipings on a diamond poset, and Figure~\ref{fig:newby1} for the facet pipes on a poset with five elements. The lower dimensional faces are then labeled by the piping that is found by collecting the pipes for the facets containing that lower dimensional face.

 It is shown in \cite{galashin} that the polytope $\mathcal{A}(P)$ exists as a truncation of the order polytope $\mathcal{O}(P)$. Truncating a face promotes it to a new facet. The full iterative procedure is described in \cite{galashin} in terms of stellations of the polar dual polytope. The stellations, or dually the truncations, must proceed in order of  dimension of the face to be stellated or truncated. 
The promoted faces are those for which the adjacent facets of $\mathcal{O}(P)$ are labeled by edges of the Hasse diagram which can together constitute a single pipe. See Figure~\ref{fig:trunc1} for the process on a polygon, and Figure~\ref{fig:newby1real} for the order polytope and the realization (via facet inequalities given in the next section) that exhibit the truncations on the order polytope which result in the example shown in Figure~\ref{fig:newby1}.

\begin{figure}
    \centering
    \includegraphics[width=.85\linewidth]{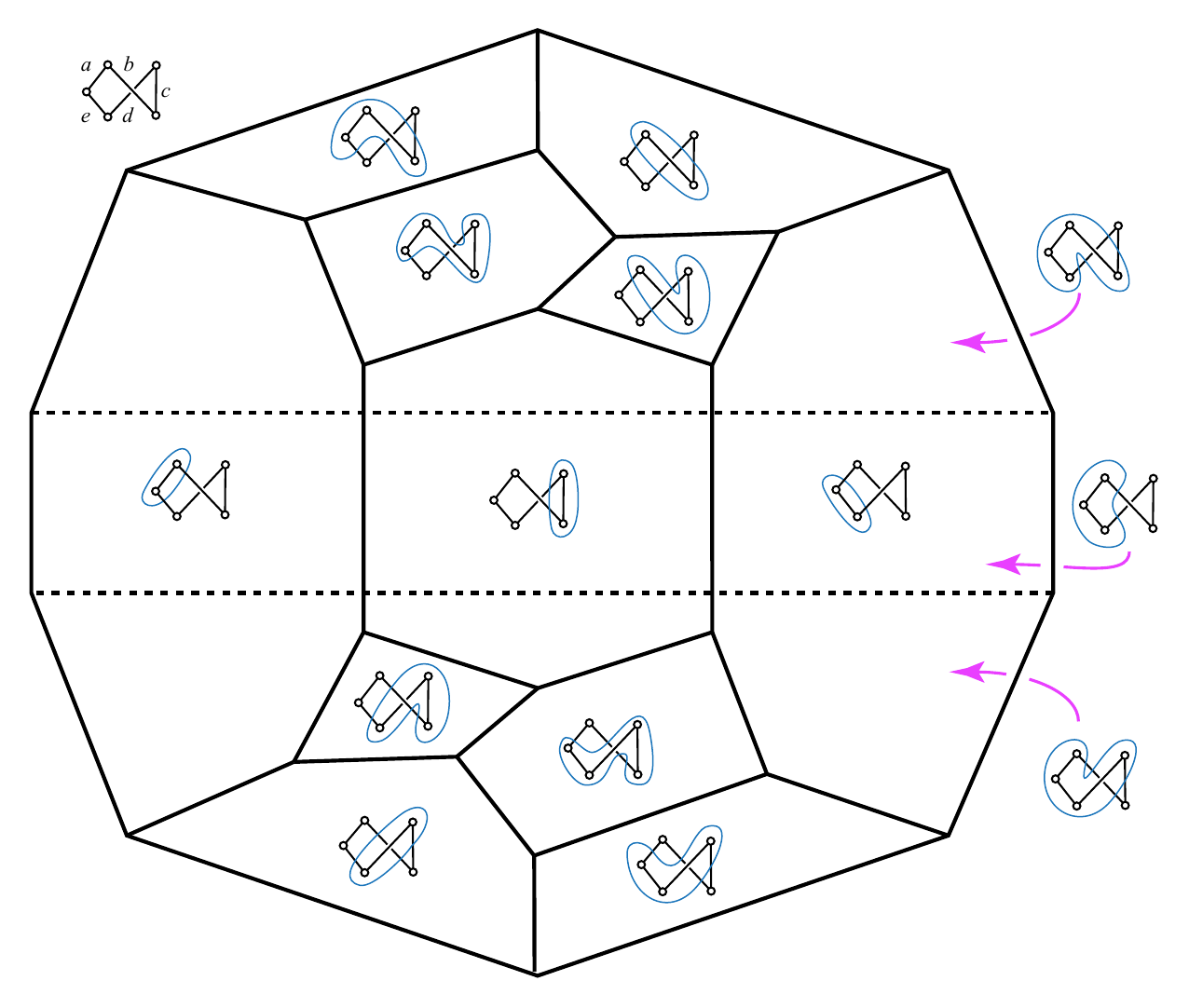}
    \caption{Acyclonestohedron. Specifically, the acyclic poset associahedron for the poset with Hasse diagram shown. Facets are labeled by pipes: tubes that are connected convex subposets. Figure~\ref{fig:newby1real} shows the order polytope for this poset, and the geometric realization of this acyclic poset associahedron.}
    \label{fig:newby1}
\end{figure}

In \cite{mppfull} the acyclonestohedron is defined more generally, and for a poset the acyclonestohedron is the (acyclic) poset associahedron.
Notice that a piping does not have to obey the requirement that the union of its pipes is an induced graph, in fact the edges of the derived graph achieved by collapsing the piping often result from an edge (a covering relation of $P$) between two pipes of the piping. However, we have the following from \cite{mppfull}: when we take the line graph $L(P)$ of the Hasse diagram\footnote{The line graph has a node for each edge (covering relation) of the Hasse diagram of $P$ and an edge between each pair that are coincident, both involving the same element of $P$.}, each pipe of $P$ corresponds to an \textit{induced} tube of $L(P)$: the edges in the pipe are the nodes in that tube. Moreover, a piping of the poset becomes a tubing of the line graph, via the direct translation: if an edge of the Hasse diagram is in a pipe of the piping, then the corresponding node of the line graph is in the corresponding tube of the induced tubing. The tubes will be compatible, either nested or non-adjacent due to the line graph construction. We show an example (with colored tubes) in Figure~\ref{fig:runonecorre}.  In fact, this leads to a main result in \cite{mppfull}: the authors discovered that  the acyclic poset associahedron $\mathcal{A}(P)$ for is realized as a cross section of the graph associahedron  $\mathcal{K}(L(P))$  for the line graph of the Hasse diagram.

\section {Multiplihedra for acyclic poset associahedra.}
\label{new}

Using the correspondence between pipings and induced acyclic tubings, it is straightforward to define the 2-colored version of the acyclonestohedron for a poset $P$: the acyclic poset multiplihedron. 
\begin{defn}
    For a poset $P$ we define the \textit{colored pipings} on $P$ as pipings with blue, red, and purple pipes, such that the induced colored tubing on the line graph of the Hasse diagram is a colored tubing of that line graph. Thus all the same requirements hold: a colored piping is just a special colored tubing, and the colored pipings are ordered via the same relations of adding or resolving colored tubes as listed in Definition~\ref{multdef}.
\end{defn}
 For example, we show a colored piping and its induced colored tubing on the line graph, in Figure~\ref{fig:runonecorre}. The poset of colored pipings for a poset $P$ is called the acyclic poset multiplihedron $\mathcal{J}(P)$.

 \begin{figure}[h!]
    \centering
    \includegraphics[width=0.5\linewidth]{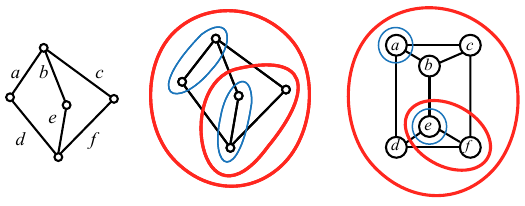}
    \caption{A colored piping on the poset with Hasse diagram at left, and its induced colored tubing on the line graph of that Hasse diagram, on the right. Some facet tubings induced by pipings that contain this piping are shown in Figure~\ref{fig:runonefacet}, and several maximal tubings with coordinates calculated are shown in Figure~\ref{fig:runonenum}.}
    \label{fig:runonecorre}
\end{figure}

\begin{theorem}
    The colored pipings of a poset $P$ with $n$ nodes correspond to the face poset of a convex polytope $\mathcal{J}(P)$ of dimension $n-1$.
\end{theorem}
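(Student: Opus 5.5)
We will realize $\mathcal{J}(P)$ as a cross section of the graph multiplihedron $\mathcal{J}(L(P))$ of the line graph of the Hasse diagram of $P$. This parallels the way \cite{mppfull} realizes $\mathcal{A}(P)$ inside $\mathcal{K}(L(P))$. Let $E$ be the set of covering relations of $P$, so $L(P)$ has $|E|$ nodes and $\mathcal{J}(L(P))\subset\mathbb{R}^{E}$ is $|E|$-dimensional, with the facet inequalities of Section~\ref{rev1}. We intersect this polytope with a linear subspace $W$ encoding acyclicity. Following the edge realization of $\mathcal{O}(P)$, $W$ is cut out by one equation per independent cycle $c$ of the underlying graph of the Hasse diagram: $\sum_{e\text{ with }c}x_e=\sum_{e\text{ against }c}x_e$. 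Unlike the order polytope, we impose no normalization $\sum x_e=1$, since the extra color dimension replaces it. The cycle space has dimension $|E|-n+1$ because the Hasse diagram is connected, so $\dim W=n-1$. The candidate is $\mathcal{J}(P):=\mathcal{J}(L(P))\cap W$, and by construction it has dimension at most $n-1$.

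The key steps, in order, are as follows.

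First, we show that $W$ meets the relative interior of $\mathcal{J}(L(P))$, which gives dimension exactly $n-1$. To do this, we exhibit an explicit point: take $x_e=h(j)-h(i)$ for $e=(i\precdot j)$, where $h$ is a suitably scaled height or rank function. Such a potential difference automatically lies in $W$. We then check that a suitable choice of scaling puts this point strictly between all lower and upper facet inequalities.

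Second, we identify the faces of the cross section. A face of $\mathcal{J}(L(P))$ labeled by a colored tubing $T$ meets $W$ exactly when $T$ is induced by a colored piping. For the vertices, we use the coordinates of Section~\ref{vert}. A maximal colored tubing of $L(P)$ has vertex coordinates built from powers of $3$. Since these coordinates are not generally in $W$, vertices of the cross section are instead intersections of $W$ with higher faces. So the correct statement is that the face $F_T$ meets $W$ in its relative interior if and only if $T$ is a colored piping, and that $F_T\cap W$ then has dimension $(n-1)$ minus the number of tubes, with purple tubes counted appropriately.

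For the "only if" direction, we argue by contradiction using the cycle equations. Suppose a tube is not convex, or the collection is cyclic. Collapsing the tubes yields a directed cycle in the quotient. On the face $F_T$, the tight facet inequalities force the sum of coordinates around that cycle to be strictly positive in one direction, which contradicts the cycle equation. This is the argument of \cite{mppfull} adapted to colored tubes. Lower facets and upper facets give the same sign information, because the coordinates on a face are positive combinations.

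For the "if" direction, we construct a point on $F_T\cap W$ for each colored piping. We first contract the pipes, which yields an acyclic quotient poset. We then combine a potential function on the quotient with the vertex formulas of Section~\ref{vert} inside each pipe, perturbing within the face so that all cycle equations hold.

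Finally, we check that the order relations coincide. Once faces of the cross section correspond bijectively to colored pipings, the face order is inherited from $\mathcal{J}(L(P))$. This order is the order of Definition~\ref{multdef} restricted to colored pipings, because a face of a cross section contains another exactly when the ambient faces do.

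The main obstacle is the "if" direction of the second step: producing a point in $W$ inside the relative interior of each piping face. Purple tubes are the hard case, because upper facets tie together the coordinates outside the purple tubes with weights $3^{|t_j|}$, and these may conflict with the cycle equations. If a direct construction fails, the fallback is to realize $\mathcal{J}(P)$ instead as a fiber-product: $\mathcal{A}(P)$ times an interval, truncated along faces corresponding to purple structure, as in \cite{doker} lifting. Dimension and face structure would then follow from the known polytopality of $\mathcal{A}(P)$.
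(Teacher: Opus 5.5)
Your framework is the paper's: intersect $\mathcal{J}(L(P))$ with the evaluation space $W$, and show that a face $F_T$ meets $W$ exactly when $T$ is induced by a colored piping. Your ``only if'' direction is a rough version of Lemma~\ref{uno}. To make it rigorous, check the strict cycle inequality at every vertex of $F_T$ using the formulas of Section~\ref{vert}. There, base-$3$ growth makes an edge of $c$ that is adjacent to but outside a pipe $t$ outweigh all of $t$; ``positive combinations'' alone gives no strictness.

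The genuine gap is the ``if'' direction. It is the substance of the theorem, and you leave it open. Your proposed construction cannot work as stated, for three reasons:
\begin{itemize}
\item A pipe may itself contain cycles of the Hasse diagram; the universal pipe contains all of them.
\item A potential function on the quotient says nothing about those internal cycles.
\item The coordinates of Section~\ref{vert} inside such a pipe come from a maximal tubing of its line graph. That tubing is never acyclic, so by your own ``only if'' estimate these coordinates lie strictly off some cycle hyperplane.
\end{itemize}
So ``perturbing within the face so that all cycle equations hold'' is not a small correction; it is the whole claim. Your first step, a point of $W$ in the interior, is the special case where $T$ is the purple universal pipe alone, and its ``suitable scaling'' is never checked. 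The fallback, truncating $\mathcal{A}(P)\times I$, is a different proof. Its face lattice would still have to be matched with the covering relations of Definition~\ref{multdef}, and that does not follow from the polytopality of $\mathcal{A}(P)$.

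The missing idea, which is how the paper argues, is to stop looking for a single point of $F_T\cap W$. Instead, find vertices of $F_T$ on both sides of every cycle hyperplane at once.
\begin{itemize}
\item Refine $T$ to a maximal colored piping. It has $n-1$ red and blue pipes, so completing it to a vertex of $\mathcal{J}(L(P))$ takes exactly $r=|E|-n+1$ further tubes.
\item For each cycle there is a pair of edges, one with and one against the traversal, that no pipe separates. Deleting one edge from each pair leaves a spanning tree, which gives a cycle basis $H_1,\dots,H_r$ for $W$.
\item When completing, you can decide independently for each pair which edge is nested deeper. Base-$3$ growth then fixes the side of $H_i$ on which the vertex lies. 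So vertices of $F_T$ occupy all $2^r$ open chambers.
\end{itemize}
Write $\phi\colon\mathbb{R}^{E}\to\mathbb{R}^{r}$ for the map given by the cycle functionals. A finite set meeting every open orthant of $\mathbb{R}^r$ has $0$ in the interior of its convex hull: a supporting functional $\ell$ would be strictly negative on the orthant whose signs are opposite to those of $\ell$. Hence $0\in\operatorname{int}\phi(F_T)=\phi(\operatorname{relint}F_T)$. This is exactly the relative-interior statement you correctly identified as necessary. It also yields your first step and the dimension count $n-1$.
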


\begin{proof}
    The proof is via an explicit geometric realization, which we will describe and prove in the next section. The summary is that the acyclic poset multiplihedron is a geometric cross section of the graph multiplihedron of the line graph of the Hasse diagram.
\end{proof}

 For examples: the vertices and some faces of the acyclic multiplihedron polytope for the bowtie poset are shown in Figure~\ref{fig:bowtiemult}, also pictured in Figure~\ref{fig:topp}. The acyclic multiplihedron for the diamond poset with labels is shown in Figure~\ref{fig:diamondmulti}.

Recall that for a Hasse diagram that is a tree, with no cycles, the acyclic poset associahedron is called the operahedron \cite{laplante-anfossi}. Since there are no cycles in the Hasse diagram of $P$, the cross section is trivial:  $\mathcal{A}(P)$ is combinatorially equivalent to  $\mathcal{K}(L(P))$, the graph associahedron of the line graph of the Hasse diagram. Similarly we have the following:
 \begin{corollary}
     A poset $P$ whose Hasse diagram has no cycle (every edge is a bridge) has an acyclic multiplihedron which is the graph multiplihedron of the line graph of the Hasse diagram of $P.$ That is,  $\mathcal{J}(P) \equiv  \mathcal{J}(L(P))$. 
 \end{corollary}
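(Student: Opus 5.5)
The plan is to show that, when the Hasse diagram of $P$ is a forest, the colored pipings of $P$ and the colored tubings of $L(P)$ are the same objects under the translation of Section~\ref{rev3}, and that the partial orders on them agree. The order agreement is immediate from the definition of colored pipings, since they are ordered by exactly the covering relations of Definition~\ref{multdef}. So the real content is a bijection on underlying uncolored objects that respects the tubing compatibility conditions. Colors then transfer verbatim, because the coloring rule (only blue nested inside blue or purple) refers only to the containment relation among tubes.

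The first step is to show that pipes of $P$ are exactly the tubes of $L(P)$. A tube of $L(P)$ is a connected set $S$ of Hasse edges. Let $C$ be the subposet of endpoints of edges in $S$; it is connected because $S$ is connected in $L(P)$. I claim $C$ is convex. Suppose $a\le x\le b$ with $a,b\in C$. A saturated chain from $a$ to $b$ through $x$ is a path in the Hasse diagram. Since $C$ is connected, there is also a path from $a$ to $b$ inside the edges of $S$. A forest has unique paths between vertices, so the chain coincides with the path in $S$, and therefore $x\in C$. The converse is immediate: a pipe's edges form a connected induced subgraph of $L(P)$ with at least one node.

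The second step is to compare compatibility. Let $\mathcal{N}$ be a collection of pipes that are pairwise nested or disjoint as sets of elements.
\begin{itemize}
\item \emph{Acyclicity is automatic.} Collapsing disjoint connected subtrees of a forest leaves a forest, so no directed cycle can appear.
\item \emph{Pipings map to tubings.} The induced tubes on $L(P)$ are nested or disjoint as edge sets, and they are non-adjacent, as noted in Section~\ref{rev3}: vertex-disjoint pipes share no endpoint, so their edge sets are not adjacent in $L(P)$. Hence every piping maps to a tubing.
\item \emph{Tubings map back to pipings.} Let $s,t$ be tubes in a tubing of $L(P)$ that are disjoint as edge sets. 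By the induced-union rule they are non-adjacent in $L(P)$, so their edge sets share no endpoint. Their vertex sets are therefore disjoint, and they form disjoint pipes.
\end{itemize}
This gives a bijection between pipings of $P$ and tubings of $L(P)$. With colors included, it becomes an isomorphism of the colored face posets, so $\mathcal{J}(P)\equiv\mathcal{J}(L(P))$. Alternatively, the theorem's cross-section realization has no cycle equations here, so the cross section is all of $\mathcal{J}(L(P))$, and this already gives the result directly.

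The main obstacle is the convexity step: we need that every connected edge set of a tree is convex. This is exactly where the absence of cycles is used. In the diamond poset, for example, an edge path from the bottom through one side to the top is connected but not convex. The uniqueness of paths in a forest is what closes this gap.
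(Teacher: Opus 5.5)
Your argument is correct, but your main route is not the paper's. The paper gets the corollary in one line from its realization theorem. There, $\mathcal{J}(P)$ is the cross section of $\mathcal{J}(L(P))$ by the evaluation space. When the Hasse diagram is a tree there are no cycle equalities, so the cross section is the whole polytope; this is the alternative you mention at the end.

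Your primary argument is instead a direct combinatorial identification, which for a tree has three parts:
\begin{itemize}
\item tubes of $L(P)$ and pipes of $P$ correspond, since connected edge sets have convex endpoint sets by uniqueness of paths;
\item disjoint tubes are non-adjacent in $L(P)$ exactly when the corresponding pipes are disjoint as element sets;
\item acyclicity is vacuous, because contracting disjoint subtrees leaves a tree.
\end{itemize}
So colored pipings and colored tubings are literally the same objects under the same covering relations.

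The paper's route buys brevity, and it shows the two realizations coincide as polytopes, not just as face posets. But it rests on the full cross-section theorem. It also tacitly uses the fact you prove: without cycles, every tubing of $L(P)$ already comes from a piping. In the paper this is hidden in the assertion, in the proof of Lemma~\ref{uno}, that any failure of acyclicity is witnessed by some cycle $c$. Your route needs only the Devadoss--Forcey theorem that $\mathcal{J}(L(P))$ is a polytope, and it shows exactly where the tree hypothesis enters.

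One small check is missing from your first step. Showing that the endpoint set $V(S)$ of a tube $S$ is a connected convex subposet is not quite enough. You also need that the pipe $V(S)$ corresponds back to $S$, that is, every Hasse edge with both endpoints in $V(S)$ already lies in $S$. Otherwise $S\mapsto V(S)$ need not invert the map from pipes to tubes. This genuinely fails without the tree hypothesis, even when $V(S)$ is convex. In the diamond, three consecutive edges of the square form a tube of $L(P)$ whose endpoint set is all of $P$, and that pipe has four edges. In a tree the check follows from your same unique-path argument: an extra edge would close a cycle with the path joining its endpoints inside $S$.
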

For an example of a tree-like poset, the acyclic multiplihedron for the three-edge claw poset with vertex labels is shown in Figure~\ref{fig:clawmulti}.  

\begin{figure}
    \centering
    \includegraphics[width=\linewidth]{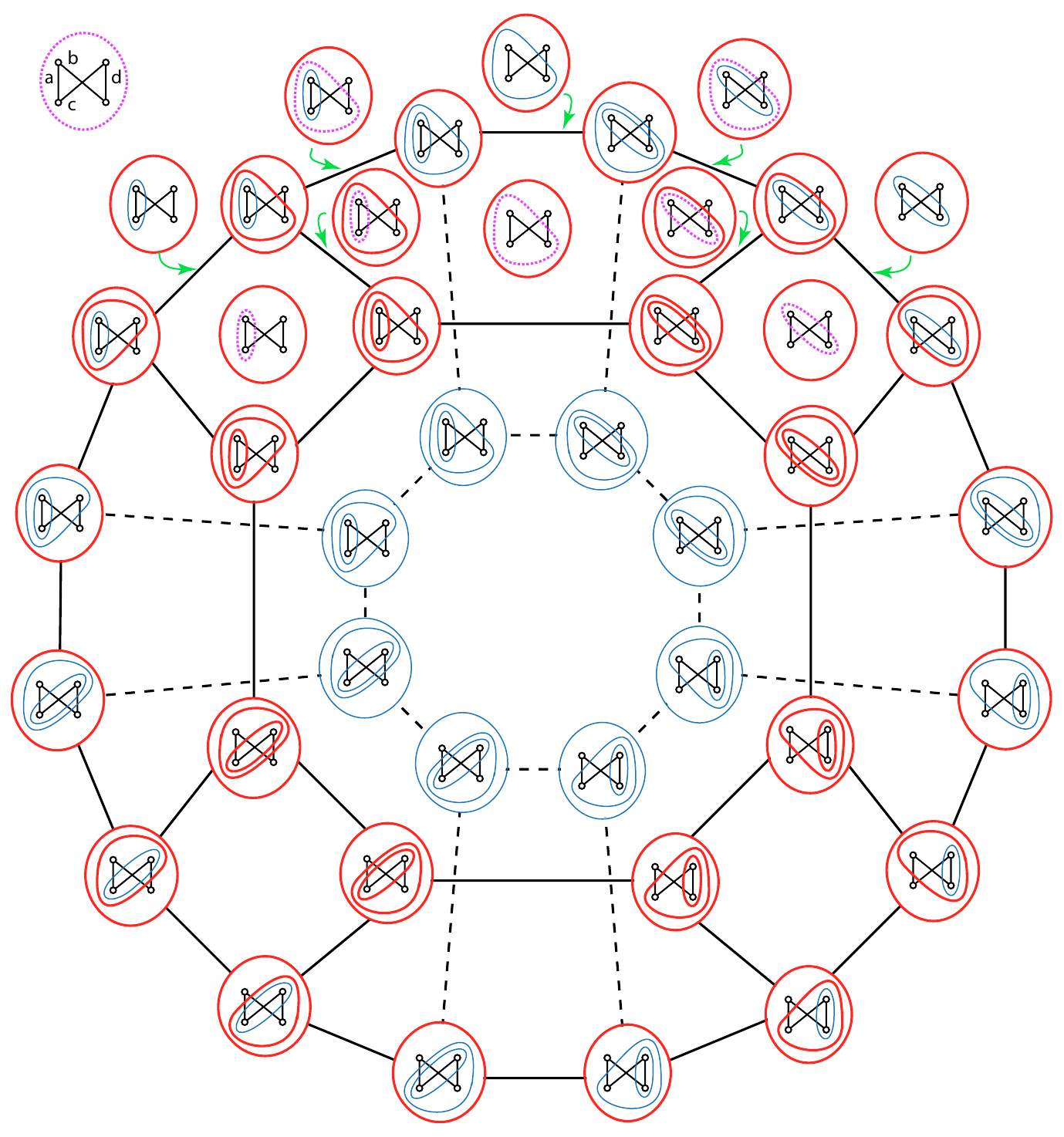}
    \caption{ Multiplihedron for bowtie poset, showing all the vertices and some additional face labels.  The $f$-vector is $(32, 48, 18).$}
    \label{fig:bowtiemult}
\end{figure}

\subsection{Realization}\label{real}

To realize the acyclic poset multiplihedron $\mathcal{J}(P)$  as a convex hull we will start with the same set of facet inequalities as for the graph multiplihedron of the line graph of the Hasse diagram, $\mathcal{J}(L(P))$. To those we add an equality for each cycle in the Hasse diagram: choosing a direction to follow that cycle we add the coordinates whose covering relation agrees with that direction and subtract the ones which do not, setting the total equal to zero.

Looking ahead, the proof that these inequalities and equalities together give a realization of the acyclic multiplihedron will rely on the fact that only faces of the graph multiplihedron corresponding to induced acyclic tubings will intersect with the hyperplane defined by the cycle equalities. Therefore we note that there is a practical choice between efficiency in constructing the realizations for posets. One
can either include an inequality for every upper and lower tubing of the line graph $L(P)$, or just for those tubings
which use only tubes that are convex in terms of the poset. The latter is faster for constructing a single
example, but if you have several posets with the same underlying line graph of all their Hasse diagrams, you
can keep all the same inequalities and change only the cycle equalities to quickly realize them all.
\begin{defn}\label{really}
    For a poset $P$ then we define the following polytope, which we will show in Section~\ref{proofy} to be in fact combinatorially equivalent to $\mathcal{J}(P)$. Here each node $v_i$ of the line graph $L(P)$ corresponds to an edge of the Hasse diagram of $P.$
\end{defn}
\begin{enumerate}
    \item For each colored piping of $P$ with only one blue pipe $t$, a lower facet given by $$\sum_{v_i\in t} x_i \ge 3^{|t|-1} .$$
    \item For each colored piping of $P$ with only one red pipe $P$ itself, and (possibly) $k$ purple pipes $t_1\dots t_k$, an upper facet given by: $$\sum_{v_i\notin \cup t_j}x_i \le 3^n-\sum_{j=1}^k3^{|t_j|}.$$
    \item For each cycle $c$ in the underlying graph of the Hasse diagram, an equality. Choosing an arbitrary direction to traverse the cycle, the orientation (covering relation) of the edge $v_i$ in $c$ will either agree with or be against the direction of traversal of $c$.
    $$\sum_{v_i \text{ with }c }x_i = \sum_{v_i \text{ against } c}x_i $$ Following \cite{mppfull} we will refer to the intersection of all the cycle equalities as the \textit{evaluation space} for the poset. Note that including an equality for every cycle is often redundant, since there are bases of cycles that are sufficient to determine the evaluation space; we will point this out in the proof.
\end{enumerate}

\begin{table}[h!]
    \centering
    \begin{tabular}{| c | c | c |}
\hline
 lower facets & upper facets & unused facets \\ \hline
    $x_a \ge 1$  &  $x_a+x_b \le 72$  &  $x_a\le 54$ \\
 $x_b \ge 1$   &  $x_c+x_d \le 72$  &  $x_b\le 54$ \\
  $x_c \ge 1$ &   $x_a+x_d \le 75$  &  $x_c\le 54$ \\
   $x_d \ge 1$    &    $x_b+x_c \le 75$  &  $x_d\le 54$ \\
   $x_a+x_b \ge 3$    &  $x_a+x_b+x_c \le 78$ &  $x_a +x_c\le 72$ \\
   $x_c+x_d \ge 3$  &   $x_b+x_c+x_d \le 78$  &  $x_b +x_d\le 72$ \\
$x_a+x_b+x_c+x_d \ge 27$  & $x_a+x_b+x_d \le 78$ & $x_a+x_b+x_c \ge 9$ \\ 
    & $x_a+x_c+x_d \le 78$ & $x_a+x_b+x_d \ge 9$ \\
  &  $x_a+x_b+x_c+x_d \le 81$ & $x_a+x_c+x_d \ge 9$ \\
\underline{~~~~~~~cycle equality~~~~~~~} & &  $x_b+x_c+x_d \ge 9$\\
$x_a-x_b+x_c-x_d = 0$ && $x_a+x_c \ge 3$\\
&& $x_b+x_d \ge 3$\\
\hline
\end{tabular}
    \caption{The realization for the diamond poset acyclic multiplihedron, using variables $(x_a,x_b,x_c,x_d)$ with subscripts from the edges of the diagram in Figure~\ref{fig:diamondmulti}. Facet labels are shown in Figure~\ref{fig:multiposetdifac}. An image of the realization is in Figure~\ref{fig:diamon_mult}.}
    \label{tab:diam}
\end{table}

\begin{figure}
    \centering
    \includegraphics[width=\linewidth]{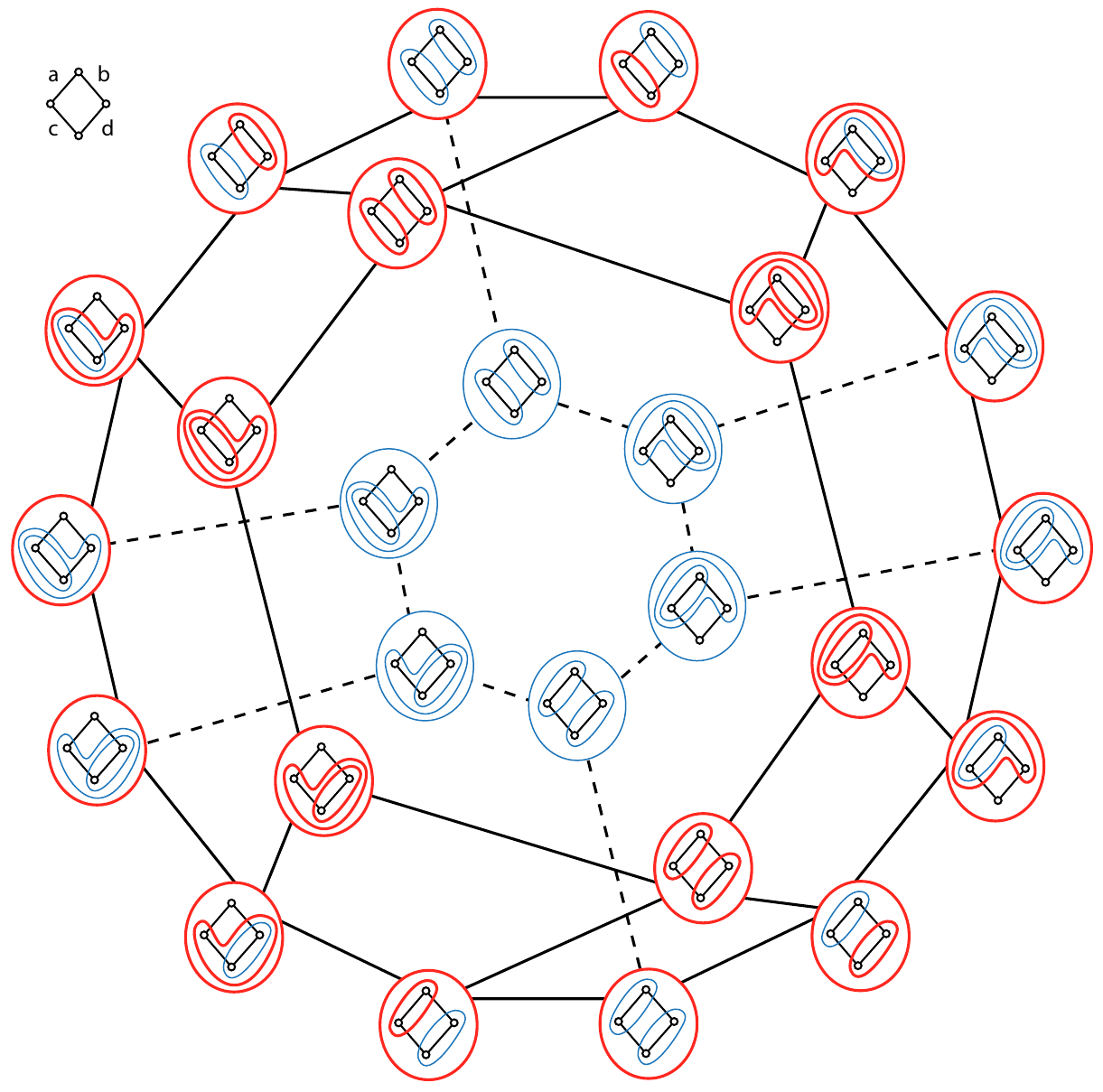}
    \caption{Multiplihedron for diamond poset. The $f$-vector is $(26, 40, 16).$ }
    \label{fig:diamondmulti}
\end{figure}

\begin{figure}
    \centering
    \includegraphics[width=\linewidth]{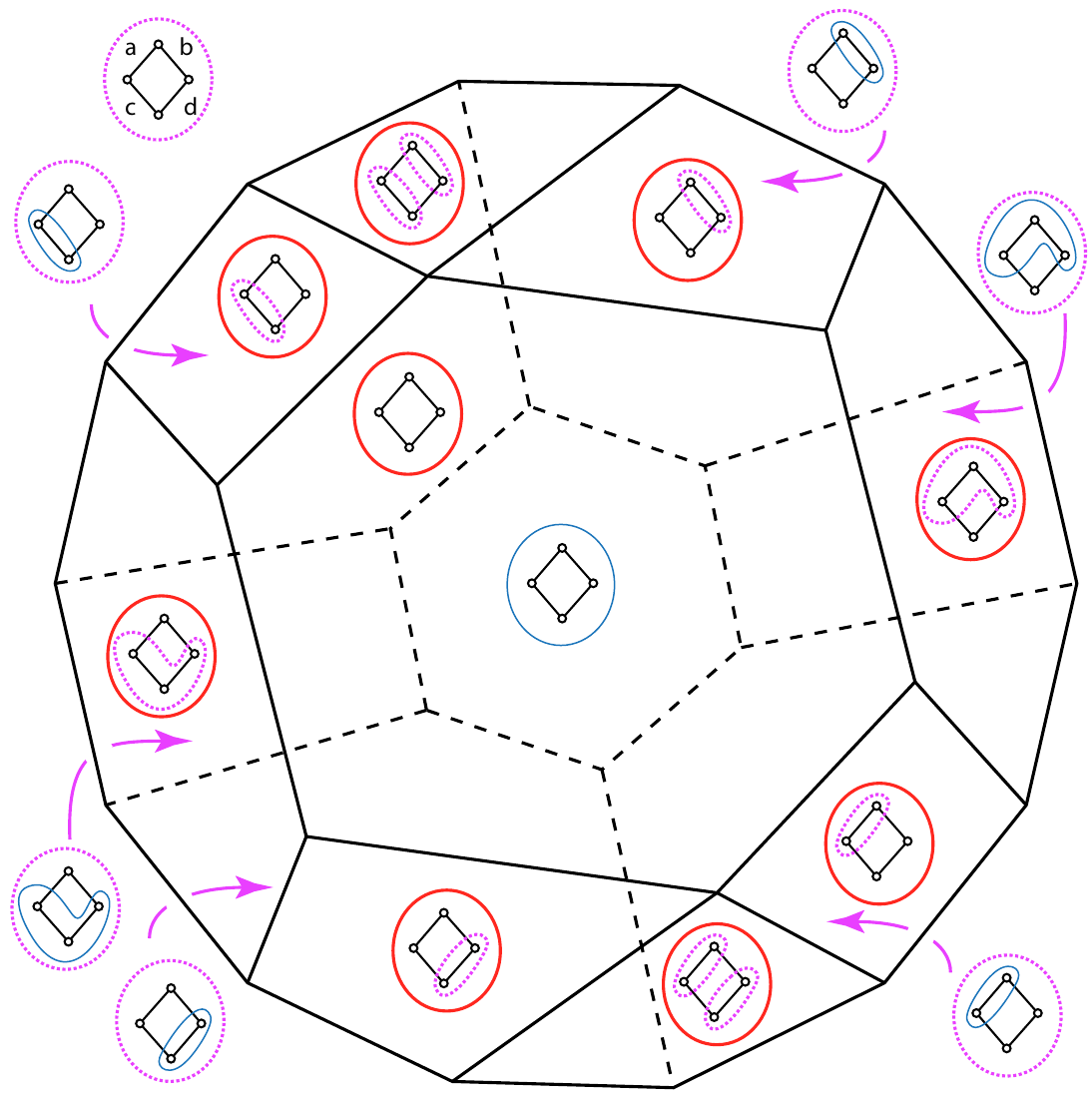}
    \caption{Multiplihedron for diamond poset, showing facet labels for the calculations in Table~\ref{tab:diam}. }
    \label{fig:multiposetdifac}
\end{figure}

\begin{figure}
    \centering
    \includegraphics[width=\linewidth]{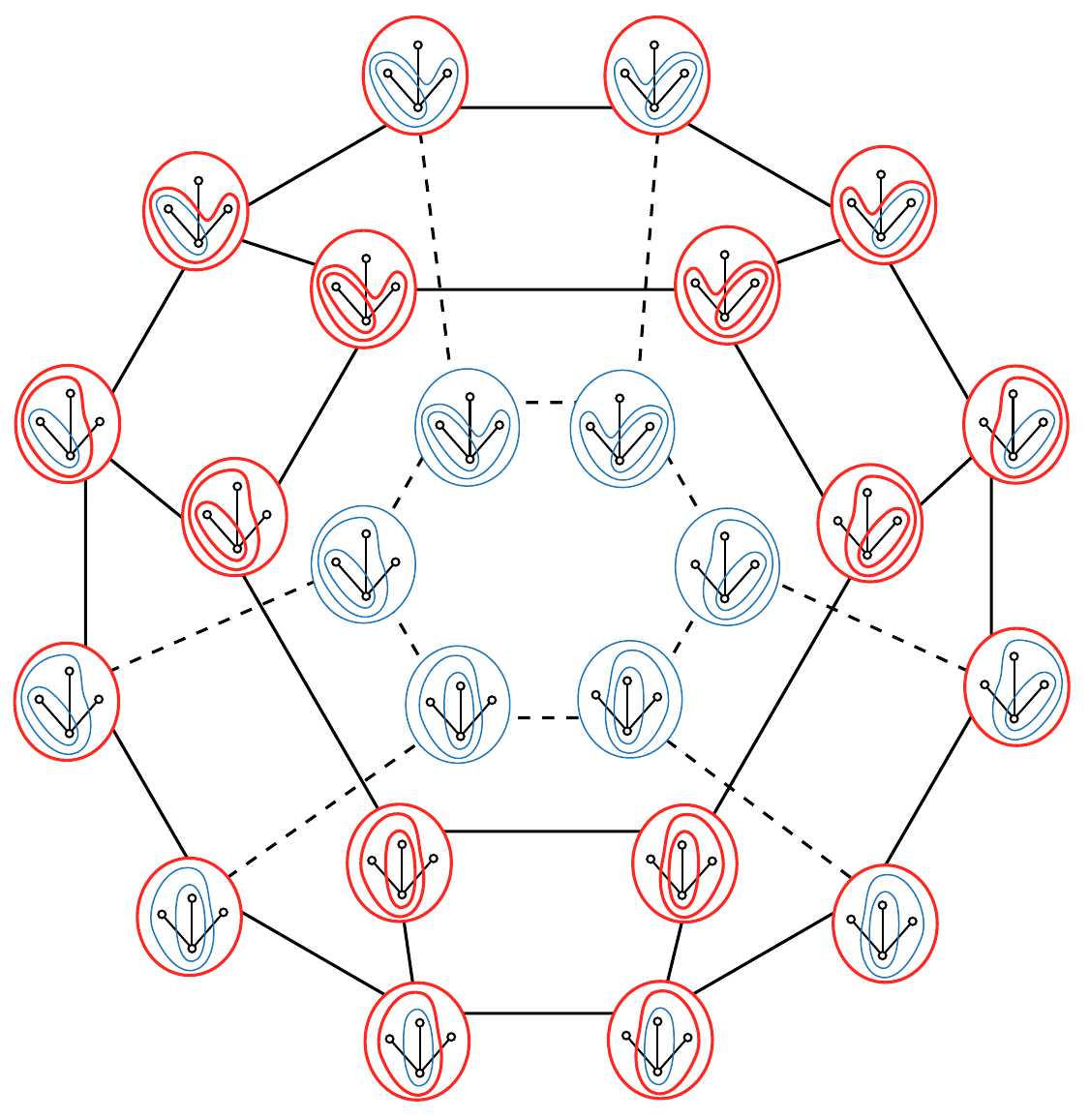}
    \caption{Multiplihedron for claw poset.  The $f$-vector is $(24, 36, 14).$ Since the Hasse diagram is a tree, this is precisely the multiplihedron for its line graph, the complete graph on three vertices. Thus it is a 3D permutohedron, as shown in \cite{dev-forc}.}
    \label{fig:clawmulti}
\end{figure}

\textbf{Examples:~} Table~\ref{tab:diam} shows the realization for the diamond poset $P$, using variables $(x_a,x_b,x_c,x_d)$ with subscripts from the edges of the diagram in Figures~\ref{fig:diamondmulti} and~\ref{fig:multiposetdifac}. The upper facets are in the foreground of those figures, and the lower facets in the background. In Table~\ref{tab:diam} we also list the unused facets of the full 4D cycle-graph multiplihedron $\mathcal{J}(L(P))$ for comparison. The choice of positive and negative for the coefficients in the cycle equality is  $x_a-x_b+x_c-x_d =0$ (the choice is immaterial since the cycle equality can be multiplied by -1.)




Table~\ref{tab:bow} shows the realization for the bowtie poset, using variables $(x_a,x_b,x_c,x_d)$ with subscripts from the edges of the diagram in Figure~\ref{fig:bowtiemult}. The upper facets are in the foreground, and the lower facets in the background. We again list the unused facets of the full 4D cycle-graph multiplihedron for comparison: notice how the two realizations in Tables~\ref{tab:diam} and~\ref{tab:bow} can both include the unused facets without affecting the outcome, and thus differ only by the cycle equalities.  \\

\begin{table}[h!]
    \centering
    \begin{tabular}{| c | c | c |}
\hline
 lower facets & upper facets & unused facets \\ \hline
    $x_a \ge 1$  &  $x_a+x_b \le 72$  &  $x_a\le 54$ \\
 $x_b \ge 1$   &  $x_c+x_d \le 72$  &  $x_b\le 54$ \\
  $x_c \ge 1$ &  $x_a +x_c\le 72$   &  $x_c\le 54$ \\
   $x_d \ge 1$    &   $x_b +x_d\le 72$   &  $x_d\le 54$ \\
   $x_a+x_b \ge 3$    &  $x_a+x_b+x_c \le 78$ & $x_a+x_d \le 75$  \\
   $x_c+x_d \ge 3$  &   $x_b+x_c+x_d \le 78$  &  $x_b+x_c \le 75$ \\
$x_a+x_c \ge 3$  & $x_a+x_b+x_d \le 78$ & $x_a+x_b+x_c \ge 9$ \\ 
   $x_b+x_d \ge 3$ & $x_a+x_c+x_d \le 78$ & $x_a+x_b+x_d \ge 9$ \\
 $x_a+x_b+x_c+x_d \ge 27$ &  $x_a+x_b+x_c+x_d \le 81$ & $x_a+x_c+x_d \ge 9$ \\
 & &  $x_b+x_c+x_d \ge 9$\\
\underline{~~~~~~~cycle equality~~~~~~~} && \\
$x_a-x_b-x_c+x_d = 0$&& \\
\hline
\end{tabular}
    \caption{The realization for the bowtie poset acyclic multiplihedron, using variables $(x_a,x_b,x_c,x_d)$ with subscripts from the edges of the diagram in Figure~\ref{fig:bowtiemult}. An image of the realization is in Figure~\ref{fig:diamon_mult}.}
    \label{tab:bow}
\end{table}












\begin{figure}
    \centering
    \includegraphics[width=.25\linewidth]{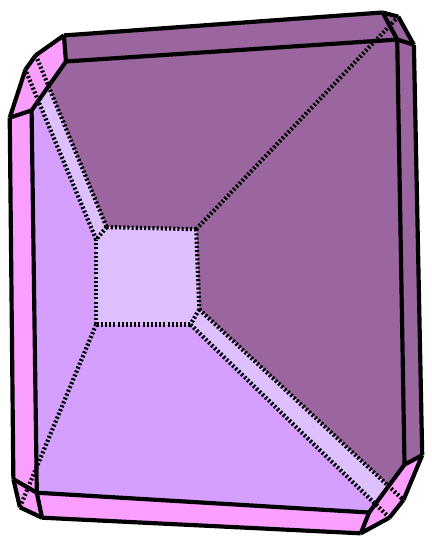}\,\,\,\,\,\,\,\,\,\,\,\,\,\,\,\,\,\,\,\,\,\,\,\,\,\,\,\,\,\,\,\,\,\,\,
     \includegraphics[width=.265\linewidth]{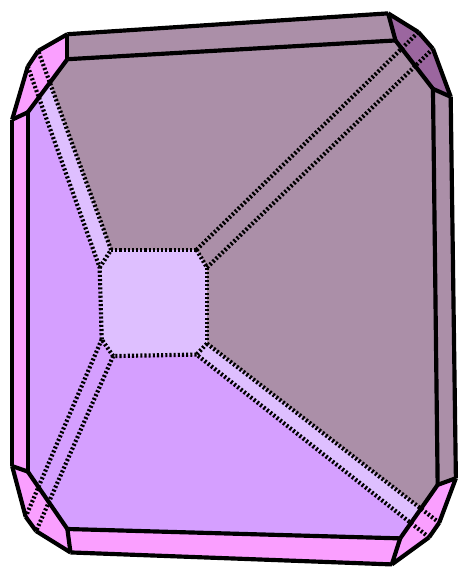}
    \caption{ On the left: acyclic multiplihedron realization for the diamond poset, as seen in Figure~\ref{fig:diamondmulti}. On the right: acyclic multiplihedron realization for the bowtie poset, as seen in Figure~\ref{fig:bowtiemult}.}
    \label{fig:diamon_mult}
\end{figure}

\subsection{Proof of the realization}\label{proofy}
We will show that the combinatorial description of the acyclic poset multiplihedra is realized by the geometric realization we give above in Definition~\ref{really}. 

The proof follows the same logic as in \cite{mppfull}, in which the authors show that the poset acyclonestohedra themselves  are found as cross sections of geometric realizations of graph associahedra. Given a poset $P$ with its Hasse diagram, they take the line graph $L(P)$ of that Hasse diagram and a realization of the graph associahedron of that line graph. Then they add equalities for each cycle in the original Hasse diagram; together the intersection of these equalities defines what they call the evaluation (sub)space. The intersection of this evaluation space with the graph associahedron $\mathcal{K}(L(P))$ is shown to be the acyclonestohedron. The proof proceeds via oriented matroid theory, and shows that a face of the graph associahedron intersects all the hyperplanes in the evaluation space if and only if that face corresponds to a tubing which arises from an acyclic tubing (piping) of the Hasse diagram. This demonstrates the desired geometric realization simply because the structure of the acyclonestohedron is precisely the structure of the graph associahedron (of the line graph of the Hasse diagram), restricted entirely to the acyclic faces.

Our proof will follow that logic exactly: starting with a realization of the graph multiplihedra. Thus the proof requires two lemmas: first that any face which corresponds to a not-acyclic tubing (so cyclic!) has its vertices all on one side of the evaluation space, so that the face itself does not intersect the evaluation space at all. The proof of this lemma will be checking that our vertex coordinates obey the correct inequalities: if not acyclic then all associated vertex evaluations will be all greater than (or all less than) at least one of the cycle equalities.

The second lemma is that any face which corresponds to an acyclic tubing does in fact intersect the evaluation subspace. For that to be true, we need such a face to have its set of vertices distributed around the evaluation space, to ensure that the face is intersected by that space. 
Specifically, the set of vertices must include points that reside in each chamber of the central hyperplane arrangement whose common intersection is the evaluation space. In terms of the oriented matroid, the chambers are represented by sign vectors using the symbols $\{+,-\}$ to refer to the position relative to each of $r$ oriented hyperplanes.

This we will demonstrate just by considering an acyclic face, and showing how to construct its vertices that lie in each chamber.
The same basic fact supports both our lemmas: if a tubing of the line graph is acyclic (induced by an acyclic piping), it must be relatively small, with several different (usually cyclic) ways to extend it to a maximal (vertex) tubing of the multiplihedron. Conversely, if the tubing is already cyclic, it is large enough that the extensions are all similar, in fact all on one side of a hyperplane. 

\begin{lemma}\label{uno}
    Let $F_T$ be a face of the graph multiplihedron $\mathcal{J}(L(P))$ associated to a non-acyclic tubing $T$ of the line graph of the Hasse diagram of a poset  $P$. Then for some cycle $c$ of the Hasse diagram of $P$ all the vertices in $F_T$ lie on one side, in the same halfspace, of the hyperplane associated to that cycle. 
    \end{lemma}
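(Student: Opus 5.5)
The plan is to prove the lemma with a \emph{domination} argument built on the explicit vertex coordinates of Section~\ref{vert}. From the non-acyclicity of $T$ we will extract a cycle $c$ of the Hasse diagram. It will consist of edges lying in tubes of $T$, traversed in either direction, together with ``connecting'' edges lying outside those tubes, all of which agree with one direction of traversal. We then show that at every vertex of $F_T$ the coordinates of the connecting edges alone exceed the total of all the other coordinates on $c$. The hyperplane evaluation $\sum_{\text{with}}x_i-\sum_{\text{against}}x_i$ is therefore strictly positive at every vertex of $F_T$. Here the vertices of $F_T$ are exactly the maximal colored tubings of $L(P)$ that contain $T$.

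\textbf{Step 1: the combinatorial reduction.} Either some tube $t\in T$ is not convex, or every tube of $T$ is convex but collapsing them produces a directed cycle.
\begin{itemize}
\item In the first case there are elements $a<x<b$ with $a,b\in t$ and $x\notin t$. Take a saturated chain from $a$ to $b$ through $x$, and let $p$ be a maximal segment of it outside $t$. This segment is a directed path outside $t$ whose first and last edges meet $t$ at elements of $P$. Close it up with an undirected path inside $t$ to get the cycle $c$.
\item In the second case we get tubes $t_1,\dots,t_k$ of $T$ (enlarging each to a maximal tube of $T$ if convenient) and covering edges $e_1,\dots,e_k$ with $e_j$ directed from $t_j$ into $t_{j+1}$. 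Join consecutive edges by paths inside the tubes. The first case is the subcase $k=1$.
\end{itemize}
In both cases every edge of $c$ outside the tubes is traversed ``with'' $c$, and every ``against'' edge lies inside some $t_j$.

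\textbf{Step 2: locating the connecting edges in a maximal tubing.} Let $S\supseteq T$ be a maximal colored tubing. The connecting edge $e_j$ is a node of $L(P)$ adjacent to nodes of $t_j$ and of $t_{j+1}$. By compatibility, any tube of $S$ containing $e_j$ must contain both of these tubes: it cannot be disjoint from them, because it is adjacent to them, and it cannot be nested inside them, because $e_j\notin t_j$. Hence $\epsilon(e_j)$ is a tube $\sigma$ of size $m$ that properly contains $t_j\cup t_{j+1}$.

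Now use the vertex formula. If $\sigma$ is blue, $x_{e_j}=3^{m-1}-\sum_{s\precdot\sigma}3^{|s|-1}$, where the children $s$ are disjoint and have total size at most $m-1$. By convexity of $3^{(\cdot)}$ the subtracted sum is at most $3^{m-2}$, so $x_{e_j}\ge 2\cdot 3^{m-2}$. By the same estimate, the coordinate sums over $t_j$ and $t_{j+1}$ together are at most $3^{m-2}$, since these sums equal $3^{|t|-1}$ for blue tubes (obtained by telescoping the vertex formula). The red case is identical with an extra factor of $3$ throughout. When $\sigma$ is red, a blue $t_j$ inside it only makes the inner sums smaller.

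\textbf{Step 3: summing.} Assign each tube $t_j$ to the connecting edge $e_j$ that leaves it. All coordinates are positive, so
\[
\sum_{\text{with }c}x_i\;\ge\;\sum_j x_{e_j}\;>\;\sum_j\sum_{i\in t_j}x_i\;\ge\;\sum_{\text{against }c}x_i .
\]
This strict inequality holds at every vertex of $F_T$, so all of $F_T$ lies in one open halfspace of the hyperplane of $c$.

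\textbf{Main obstacle.} I expect the hardest part to be making the bound in Step 2 uniform. Two things need care. First, the cycle from Step 1 must meet each tube $t_j$ along a connected segment, so that every ``against'' edge is charged exactly once. Second, the red/blue factor-of-$3$ bookkeeping must hold when $t_j$ and $t_{j+1}$ lie in different children of $\sigma$, or are themselves children of $\sigma$.

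For the first point, I would first shortcut the directed cycle of tubes so that it is minimal, visiting each tube once. For the second, I would replace the crude estimate by the exact telescoped identity: for a blue tube $\tau$, $\sum_{i\in\tau}x_i=3^{|\tau|-1}$, and for a red tube, $3^{|\tau|}$. Applied to the children of $\sigma$, this identity gives the needed comparison directly.
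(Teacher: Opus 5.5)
Your proposal is correct and is essentially the paper's own argument. For each tube $t_j$ of $T$ met by the offending cycle $c$, both you and the paper take a node of $c$ just outside and adjacent to $t_j$, observe that its smallest tube $\epsilon(v)$ in any vertex tubing strictly contains $t_j$, and use the base-$3$ vertex formulas (with the extra factor of $3$ for red) to make that coordinate exceed the entire coordinate sum over $t_j$; this gives the same strict chain of inequalities for the cycle equality at every vertex of $F_T$.

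There are two harmless slips. A red tube's coordinate sum is only bounded above by $3^{|\tau|}$, not equal to it, since blue descendants lower it. Also, consecutive tubes on $c$ may be joined by directed paths of several edges rather than single covering edges. Neither matters, because Step~3 only uses that upper bound and the exit edge of each $t_j$.
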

    \begin{proof}
         All the vertices of $F_T$ will be extensions of $T$ via adding red or blue tubes and resolving any purple tubes. Let $c$ be a cycle of the Hasse diagram on which $T$ exhibits its non-acyclic character.  Then the non-acyclic tubing $T$ must have either a tube $t$ which is not convex (involving $c$, so containing part of $c$ but failing convexity via $c$), or a set of non-nested tubes which is not acyclic, again involving $c$. In the first case,  $t$ is a single pipe which  contains all the edges of $c$ whose orientation in the Hasse diagram agrees with a particular traversal of $c$ (up or down in the Hasse diagram, depending on the choice of clockwise or counterclockwise) but not all the edges of $c$ which go against that traversal. Recalling that edges are nodes of the line graph, that means there are some nodes outside of $t$ but part of $c$.
         
         The cycle equality from  Definition~\ref{really} for cycle $c$ is as follows: $$\sum_{v_i \text{ with }c }x_i = \sum_{v_i \text{ against } c}x_i.$$ Now we consider evaluating the left hand side of that cycle equality for $c$.  Choosing the traversal of $c$ to be in the direction of the edges contained in $t$ we see the chain of inequalities: 
         

$$\sum_{v_i\text{ with }c} x_i
\le \sum_{v_i \in t} x_i
< \sum_{v_i\in (c - t)} x_i
\le \sum_{v_i\text{ against }c} x_i .$$

The second, strict, inequality above is  due to the formulas, from Section~\ref{vert}, for $x_i$ in a vertex (maximal) tubing. The value of $x_i$ grows exponentially (base 3) with the number of nodes (in smaller tubes) inside $\epsilon(v_i)$,  the smallest tube that contains node $v_i$.   For instance,  for a blue tube $t$ the values of the $x_i$ for nodes inside it sum to $3^{|t|-1}.$ By the assumptions on $t$ and $c$ there exists some $x_j$ in $c$, outside of but adjacent to $t$, which has value at least $3^{|t|}-3^{|t|-1}$, larger than the sum inside $t.$ (That follows from the basic inequality: $ 3^k+3^m < 3^{k+m+1},$ for $k,m$ non-negative integers.)\footnote{Notice that strict inequality does not hold for base 2, since $2^0 +2^0  = 2^1.$} Thus the inequality holds for blue tubings; for multicolored tubings the colors involved preserve the inequality since red tubes (which contribute an extra factor of 3) can only be inside other red tubes. Thus the outside-adjacent node value $x_j$ will increase by a factor of three if $\epsilon(v_j)$ is red, while the sum of the inside nodes increases by at most a factor of 3. 
Therefore the cycle equality fails, strictly, so  all vertices of the face of the tubing $T$ must lie on the same side of the cycle hyperplane for $c$, so  $F_T$ does not intersect the evaluation space.

The inequalities are similar for the case in which $T$ has a set of non-nested tubes $t_1,\dots,t_k$ whose union is non-acylic with respect to $c$.  Here the union $\bigcup t_j$ of non-nested tubes contains all edges of $c$ whose orientation in the Hasse diagram agrees with a particular traversal of $c$ but not all the edges of $c$ which go against that traversal. Recalling that edges are nodes of the line graph, that means there are some nodes outside of the union but part of $c$. Choosing the traversal of $c$ to be in the direction of the edges contained in the union we see the chain of inequalities:  


$$\sum_{v_i\text{ with }c} x_i 
\le \sum_{j=1}^k\sum_{v_i \in t_j} x_i 
< \sum_{v_i\in (c - \bigcup t_j)} x_i
\le \sum_{v_i\text{ against }c} x_i .$$

This time the second, strict inequality is seen via  several nodes of $c$: there exist a set of such nodes with coordinates $x_j$ in bijection with the tubes $t_j$, each $x_j$ for a node outside but adjacent to $t_j.$ Again for all blue tubes, the sum of the coordinates inside tube $t_j$ is $3^{|t_j|-1}$ while the value of $x_j$ is greater than $3^{|t_j|} - \sum 3^{|t_j|-1}.$ Thus the sum of the outside nodes is always a larger value than the sum inside. Again the colors of the tubes involved do not change that inequality, since red tubes (which contribute an extra factor of 3) can only be inside other red tubes. 

\textbf{Example}: Figure~\ref{matroid} shows a pair of pipings that demonstrates the inequalities for Lemma~\ref{uno}. Next to each non-acyclic piping $T$ is a maximal tubing that corresponds to a graph multiplihedron vertex in the facet $F_T.$ On the top row we have $x_1+x_2+x_3+x_7+x_8 = 3^4$ and $x_4 = 3(3^6-3^4-3^0)$, and the chain of inequalities is: 
$$x_1+x_3+x_7+x_8\le x_1+x_2+x_3+x_7+x_8 < x_4+x_5+x_6 \le x_2+x_4+x_5+x_6. $$

On the bottom row we have $x_1+x_3+x_7+x_8 = 1+1+1+3(3^2-2)$, $x_2=3(3^6-3^2-3^2)$, and $x_4 = 3^2-2$, and the chain of inequalities is:
$$x_1+x_3+x_7+x_8\le x_1+x_3+x_7+x_8 < x_2+x_4+x_5+x_6 \le x_2+x_4+x_5+x_6. $$
\begin{figure}
    \centering
    \includegraphics[width=0.75\linewidth]{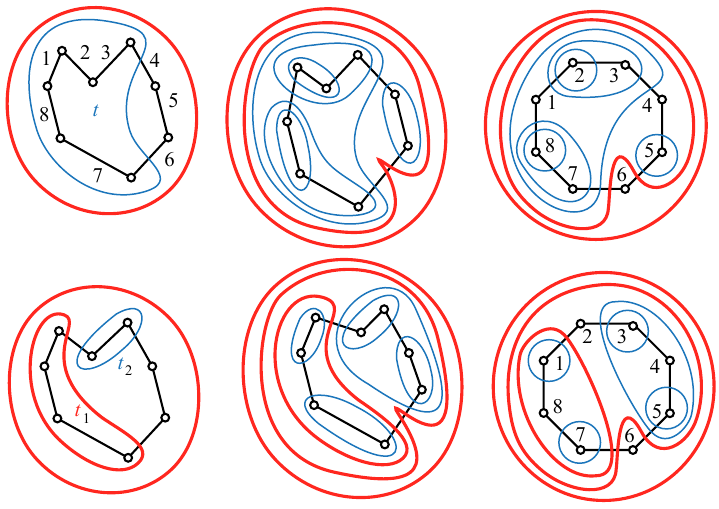}
    \caption{Demonstrating a non-convex pipe $t$ (top row) and a non-acyclic piping with $t_1,t_2$ (bottom row). The tubings (in the middle and right) are vertex tubings of the graph multiplihedron, first drawn as pipings of the edges of the poset, then as tubings of the line graph.}
    \label{matroid}
\end{figure}

        \end{proof}    

        \begin{figure}[h!]
    \centering
    \includegraphics[width=\linewidth]{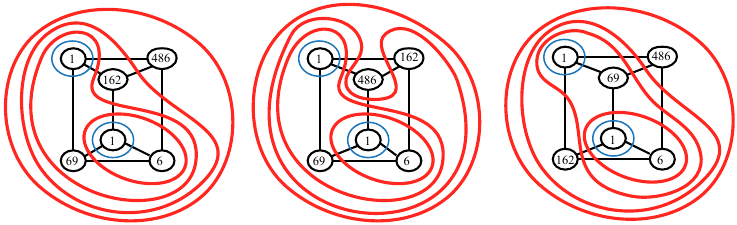}
    \caption{Examples of values assigned to the variables associated to each node in some maximal colored tubings, in the realization of the graph multiplihedron for the graph shown. These are specifically three ways (out of six ) to add tubes to the maximal acyclic poset tubing in Figure~\ref{fig:runonecorre} to get a maximal graph tubing.}
    \label{fig:runonenum}
\end{figure}
 \begin{lemma}
     Let $F_T$ be a face of the graph multiplihedron $\mathcal{J}(L(P))$ associated to an acyclic tubing $T$ of the line graph of the Hasse diagram of $P$. Then $F_T$ intersects the evaluation space, which is the intersection of all the cycle hyperplanes.
 \end{lemma}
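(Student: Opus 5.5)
Since $F_T$ is the convex hull of the vertices of $\mathcal{J}(L(P))$ that extend $T$, it suffices to show that the evaluation space meets $\operatorname{conv}(\text{vertices of }F_T)$. For a central arrangement whose intersection is a linear subspace $E$, a convex set meets $E$ as soon as the set contains, for every sign vector, a point in the corresponding closed orthant. So the plan is to follow the oriented matroid strategy of \cite{mppfull}. Fix a cycle basis $c_1,\dots,c_r$ of the Hasse diagram. Let $\phi_\ell(x)=\sum_{\text{with }c_\ell}x_i-\sum_{\text{against }c_\ell}x_i$ be the linear forms. We must show that $0\in\phi(F_T)\subset\mathbb{R}^r$, where $\phi=(\phi_1,\dots,\phi_r)$. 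Equivalently, by Farkas/Gordan duality, for every nonzero covector $\lambda$ (a signed combination $\sum\lambda_\ell\phi_\ell$, which is a signed circulation on the Hasse diagram) there are vertices of $F_T$ with $\lambda\cdot\phi\ge 0$ and with $\lambda\cdot\phi\le 0$. Reducing to one functional at a time is the key simplification.

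\textbf{Construction of extensions.} Given the acyclic colored piping $T$, I construct vertex extensions greedily, using the exponential weights of Section~\ref{vert}. Inside each tube (and in the complement region) of $T$, the contracted Hasse diagram is acyclic, so each region has a linear extension. Adding tubes in a chosen order determines which nodes of $L(P)$ get large coordinates: the node added last (outermost) in a region receives the dominant weight, roughly $3^{\text{size}}$ minus lower terms. First resolve every purple tube to blue (or red, choosing consistently so that validity is preserved), and then add nested tubes one node at a time. Now take a nonzero circulation $\lambda$ supported on edges. It suffices to find one extension where the sign of $\lambda\cdot\phi$ is positive and one where it is negative (or zero). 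I would choose the peeling order so that the dominant coordinate lands on an edge $e$ where the circulation value has the desired sign. This forces the sign, by the same base-3 domination inequality $3^k+3^m<3^{k+m+1}$ used in Lemma~\ref{uno}.

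\textbf{Main obstacle.} The hardest step is showing that such an edge can be peeled last within the relevant tube without violating tubing validity. Peeling a node of $L(P)$ last means the complementary set of edges must form a legitimate (connected) tube of the line graph, although not necessarily a convex one, since vertices of $\mathcal{J}(L(P))$ are allowed to be cyclic. Acyclicity of $T$ is what guarantees that the support of $\lambda$, restricted to the relevant region, contains edges of both orientations that are not already locked inside smaller tubes of $T$. If all edges of a cycle with one orientation were enclosed, $T$ would be non-acyclic, exactly as in Lemma~\ref{uno}. I would therefore argue region by region: take the smallest tube of $T$ (or the whole diagram) containing a nonzero part of the support of $\lambda$ not contained in a single child tube. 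After contracting the child tubes, $\lambda$ becomes a nonzero circulation there (by acyclicity the contraction does not kill it). Such a circulation has edges with both positive and negative coefficients. Choosing as the last-peeled node an edge with the desired sign, and noting that removing one node from a connected line-graph tube can be arranged to keep connectivity (choose a non-cut edge, or peel in a spanning-tree leaf order), gives the required sign. The dominance bound, including the extra factor $3$ for red tubes handled as in Lemma~\ref{uno}, then finishes the argument.
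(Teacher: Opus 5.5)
\textbf{There is a genuine gap in the sign-forcing step.} Your Gordan/Farkas reduction is valid, and the closed-orthant criterion in your first paragraph is exactly what the paper's proof rests on. The failure comes when a single dominant coordinate is said to \emph{force the sign}.

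Once you pass to an arbitrary nonzero $w=\sum_\ell\lambda_\ell\phi_\ell$, its coefficients $w_e$ are arbitrary reals. Even for $\lambda\in\{0,\pm1\}^r$ they exceed $1$ in absolute value on edges shared by several basis cycles. By contrast, $3^k+3^m<3^{k+m+1}$ only says that the root coordinate of a tube exceeds the \emph{unweighted} sum of the other coordinates in that tube, by a factor that can be as small as $2$.

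Here is a concrete failure. Take the double claw $0\prec p,q,r\prec 1$ with edges $a=0p$, $b=0q$, $c=0r$, $d=p1$, $e=q1$, $f=r1$. Use the basis functionals $\phi_1=x_a+x_d-x_b-x_e$ and $\phi_2=x_b+x_e-x_c-x_f$, and set $w=\phi_1+3\phi_2$, so $(w_a,\dots,w_f)=(1,2,-3,1,2,-3)$. Let $T$ be the universal tube alone, make $a$ its root, and add the blue tubes $\{b,c,d,e,f\}\supset\{b,d,e,f\}\supset\{b,d,e\}\supset\{b\},\{d\}$. This vertex has $x=(162,1,54,1,7,18)$ and $w\cdot x=-37<0$, even though the dominant coordinate sits on an edge with $w_a>0$. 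To make your route work you would need a weighted domination statement controlling the entire nested order against $w$, and the proposal supplies nothing of that kind.

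\textbf{Two smaller points.} First, the obstacle you single out is not one. Any node of the region of $t$ (the nodes of $t$ lying in no child tube) can be made the root of $t$. The remaining nodes of $t$ simply split into connected components of $L(P)$, which become the child tubes; no connectivity of the complement is required.

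Second, your choice of region does not give what you claim. The restriction of $w$ to that region is a circulation on the contracted pipe only if $\operatorname{supp} w\subseteq t$. If $t$ is minimal with that property, the restriction can vanish, because the support may be split between two children. If $t$ is smaller, edges of $\operatorname{supp} w$ in ancestor regions can carry coordinates larger than that of your chosen root.

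\textbf{How the paper avoids this.} It never dualizes, so arbitrary weights never appear. It extends $T$ to a maximal acyclic piping. For each independent cycle it picks a pair of edges lying in the same region, one oriented with the traversal and one against it, and uses these pairs to fix the cycle basis. It then produces a vertex for each of the $2^r$ sign patterns by deciding independently which edge of each pair is nested more deeply. Since each basis functional has coefficients in $\{0,\pm1\}$, unweighted base-$3$ domination is the right tool there, and your closed-orthant criterion then gives the intersection.
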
       
        
   \begin{proof}
       We consider an arrangement of cycle hyperplanes which determines the evaluation space. This arrangement is not unique, but we can choose one of the smallest such arrangements that determines the unique evaluation space.  
       The number of cycle hyperplanes is the same as the number of independent cycles in the (underlying graph of the) Hasse diagram, the cycle rank: $r = |E| - n +1$ for a connected diagram of $n$ nodes. (Kirchoff's cyclomatic number.) Notice that combining the equations of cycles gives a new cycle equation. We choose independent cycles by selecting a minimum set of edges in the Hasse diagram that need to be removed to make the underlying graph a tree, and then match a unique cycle to each of those edges, so that each cycle is broken by its matched edge. This choice provides a cycle basis, and the common intersection of the hyperplanes is the evaluation space. 


Thus the cycle equalities we have chosen are linearly independent and there are fewer of them than the dimension we are working in, which is |E|, the number of edges in the Hasse diagram. Therefore the number of chambers is exactly $2^r$,  where $r =|E|-n+1$.  These chambers have all possible sign vectors of $r$ components from $\{+,-\}$. 
The number of red and blue pipes in $T$ is at most $n-1$, including the universal pipe, since we are considering an acyclic tubing of an $n$-node poset. To find any vertex of our face $F_T$ in the graph multiplihedron, combinatorially, we have to add tubes to $T$ until we get the max number of red and blue tubes, which is $|E|$. 
So in the case where we have a maximal acyclic poset tubing $T$, we need exactly $r$ more red and blue tubes to get $|E|$ of them. 

We claim that the final calculation of the coordinates from the maximal tubings will achieve all the combinations of $>$ and $<$ for the cycle equalities, reaching all $2^r$ chambers. To demonstrate our claim we need to add  tubes so that each choice of tube determines a $+$, or  $-$ , that is, the point obeys an inequality $>$ or $<$  for each of the cycle hyperplanes.   In the case where we have fewer than $n-1$ tubes in $T$, we start by arbitrarily adding more (and resolve some purple tubes) to get a maximal acyclic tubing.

 Given the maximal acyclic tubing $T$,  for each cycle $c$ of the Hasse diagram there will be a pair of edges of the Hasse diagram neither of which are the only edge of a single tube, but are either both in or both out of every tube in $T.$ Furthermore, in each pair, given a traversal of the cycle, one edge will be oriented with the direction of the traversal, and the other against. These pairs might overlap, sharing an edge between several pairs, specifically when cycles overlap. Choosing exactly one edge from each pair and deleting it produces a spanning tree; and this choice determines a cycle basis for the evaluation space. Taking that collection of cycles and considering its hyperplanes, we claim there is a vertex of $F_T$ from the graph multiplihedron in each chamber of the central arrangement. The claim follows since the collection of edges in the pairs mentioned are nodes in the line graph that can be completely ordered in nested depth via the additional tubes (in groupings of these nodes that correspond to cycles sharing an edge.) Furthermore, for each pair of edges we can choose the tubes to make either edge in the pair at a greater nested depth, and that is a decision independent of the options chosen for the other pairs.  Then for any hyperplane of the chosen cycle basis, we can (independently of the other hyperplanes) force the vertex to lie on either side of it, due to the exponential growth of the coordinate values as nested depth decreases. Examples follow; note that the color of the added tubes again preserves the inequalities needed since the blue tubes cannot be outside of red, and the red tubes confer an extra factor of 3. 
   \end{proof}


       \textbf{Examples}: Figure~\ref{fig:runonecorre} shows a maximal acyclic piping on  $P$  the double claw poset with six covering relations shown. The same piping is shown as a tubing of the line graph of the Hasse diagram, and in Figure~\ref{fig:runonenum} are shown three of six vertex tubings of the graph multiplihedron found by adding tubes to that tubing. The leftmost can be checked to obey the inequalities: $x_a+x_d < x_b + x_e < x_c+x_f$ which  lies in the chamber designated $<-,->,$ using the two hyperplanes  $x_a+x_d = x_b + x_e$ and $x_b + x_e = x_c+x_f.$ The center obeys $x_a+x_d < x_b + x_e > x_c+x_f$ which  lies in the chamber designated $<-,+>.$ The right obeys $x_a+x_d > x_b + x_e < x_c+x_f$ which  lies in the chamber designated $<+,->.$ The fourth maximal tubing needed to lie in the chamber $<+,+>$ is left to the reader.
       A larger example is shown in Figure~\ref{fig:bigone}, to see the pairs of edges that can occur in a maximal acyclic piping with several separate cycles.

       \begin{figure}
    \centering
    \includegraphics[width=0.5\linewidth]{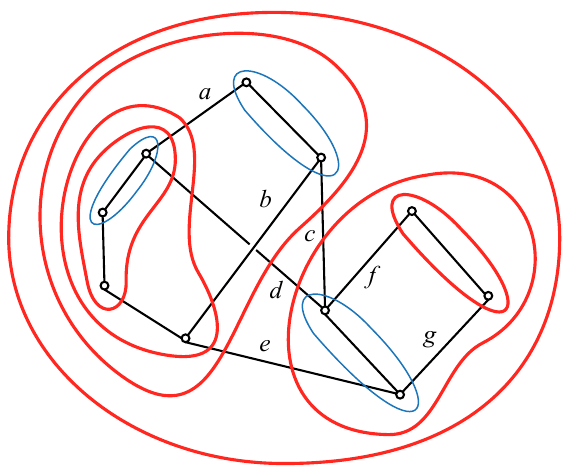}
    \caption{A maximal acyclic piping of a poset with 4 independent cycles. These can be chosen so that $\{a,b\}, ~\{c,d\}, ~\{d,e\},$ and $\{f,g\}$ are the pairs of edges in each independent cycle such that the pair is either both outside or both inside all pipes. Choosing one edge from each pair to delete gives a spanning tree. Adding tubes to make a maximal tubing of the line graph can be done so as to put the resulting points in each of the chambers of the hyperplane arrangement.}
    \label{fig:bigone}
\end{figure}



       


\subsection{New realization for acyclonestohedra}

Since the multiplihedron has two facets that are copies of the associahedron, one for all red tubes and one for all blue tubes, we can realize the acyclic poset associahedra by using the inequalities just for one of these, the cycle equalities, and an equality for the entire facet. For the all blue facet, that would be just the lower facet inequalities, with equality for the universal blue tube, $\sum_{x_i} = 3^{n-1}$ for $n$ edges in the poset.

Table~\ref{tab:newby1} shows the realization for the acyclic poset associahedron shown in Figure~\ref{fig:newby1}. The image of this realization is pictured in Figure~\ref{fig:newby1real}.

\begin{table}[h!]
    \centering
    \begin{tabular}{| c | c | c |}
\hline
 (lower) facets &  & equalities \\ \hline
    $x_a \ge 1$  &  $x_c+x_d \ge 3 $  &  \\
 $x_b \ge 1$   &  $x_d+x_e \ge 3 $  & \underline{~~~~~~~facet equality~~~~~~~}  \\
  $x_c \ge 1$ &  $x_a+x_e \ge 3 $   &  $x_a+x_b+x_c+x_d+x_e = 81$ \\
   $x_d \ge 1$    &  $x_a+x_b+x_c \ge 9$    &   \\
   $x_e \ge 1$    & $x_c+x_d+x_e \ge 9$   & \underline{~~~~~~~cycle equality~~~~~~~} \\
   $x_a+x_b \ge 3$    &  $x_a+x_d+x_e \ge 9$  & $x_a-x_b+x_c-x_d+x_e = 0$  \\
   $x_b+x_c \ge 3$  &   $x_a+x_b+x_e \ge 9$   &   \\
\hline
\end{tabular}
    \caption{The realization for the fishy poset acyclic associahedron, using variables $(x_a,x_b,x_c,x_d,x_e)$ with subscripts from the edges of the diagram in Figure~\ref{fig:newby1}. An image of the realization is in Figure~\ref{fig:newby1real}.}
    \label{tab:newby1}
\end{table}

     \begin{figure}
         \centering
         \includegraphics[width=\linewidth]{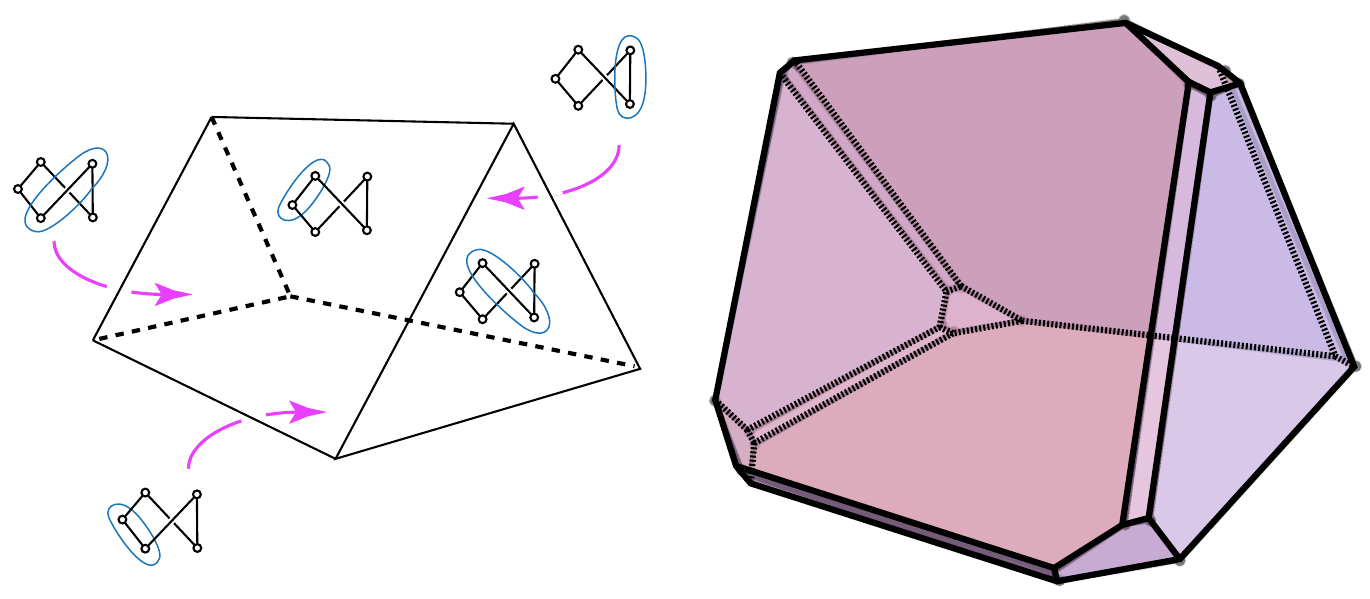}
         \caption{On the left is the order polytope $\mathcal{O}(P)$ for the poset from Figure~\ref{fig:newby1}. On the right is an image of our new realization for the acyclonestohedron.}
         \label{fig:newby1real}
     \end{figure}

\begin{figure}
    \centering
    \includegraphics[width=\linewidth]{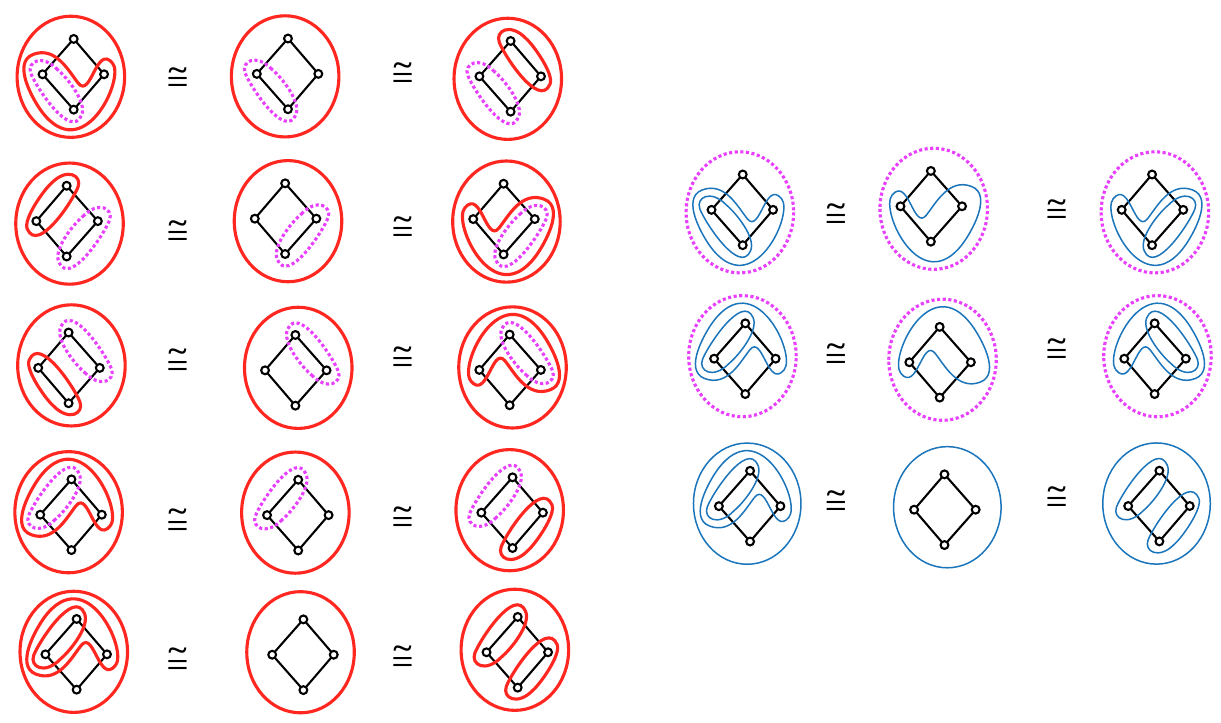}
    \caption{Example equivalences involving facets of the acyclic (diamond poset) multiplihedron which become lower dimensional faces of the quotients, under addition of red or blue tubes. These are facet pipings (center of each column of equivalences) corresponding to facet inequalities to be dropped from the multiplihedron realization to achieve (left column) the acyclic design associahedron (cubeahedron) and   (right column) the acyclic composihedron.}
    \label{fig:equivd}
\end{figure}

\begin{figure}
    \centering
    \includegraphics[width=\linewidth]{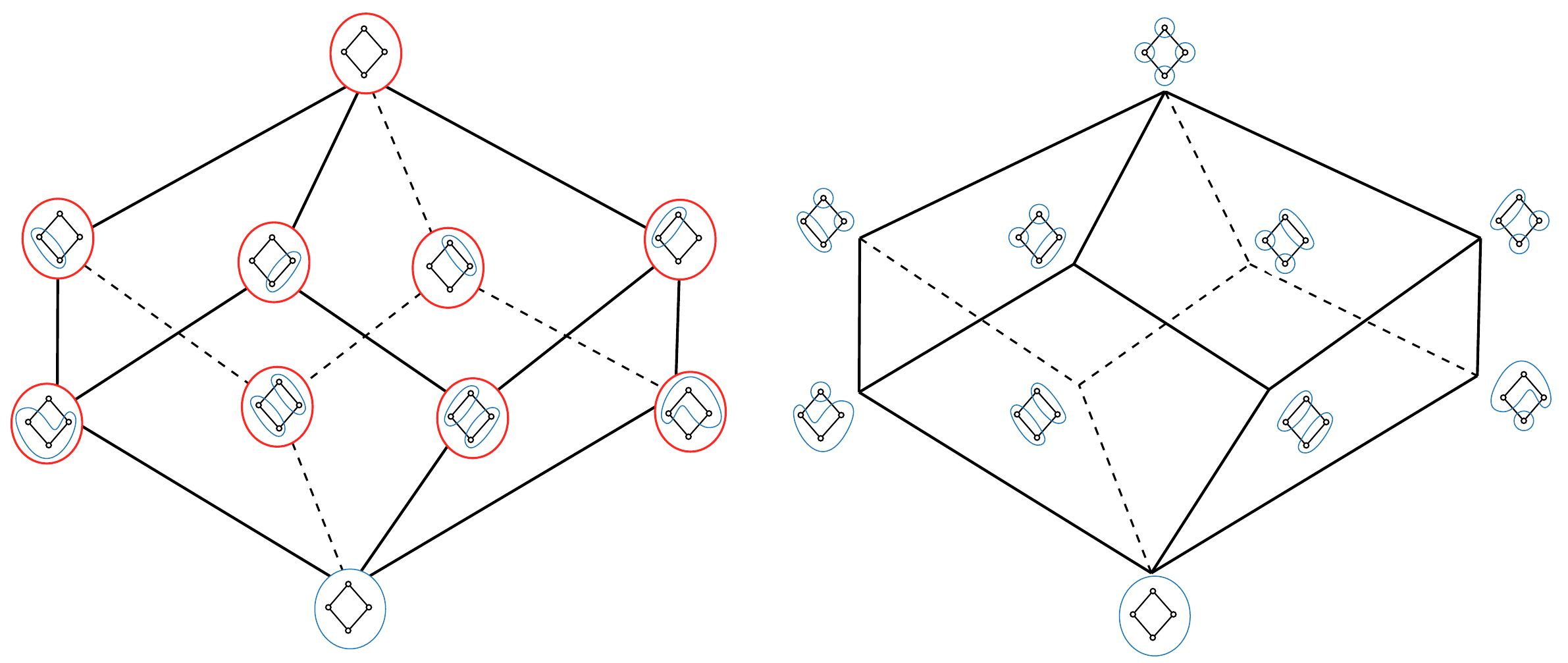}
    \caption{Vertex labels of the quotient of the diamond poset acyclic multiplihedron, under both equivalences: seen in bijection with the vertex labels of the interval polytope of the order polytope of the same poset. These vertices are the faces of the order polytope, as seen in Figure~\ref{fig:trunc1}. }
    \label{fig:intermap}
\end{figure}

\begin{figure}
    \centering
    \includegraphics[width=\linewidth]{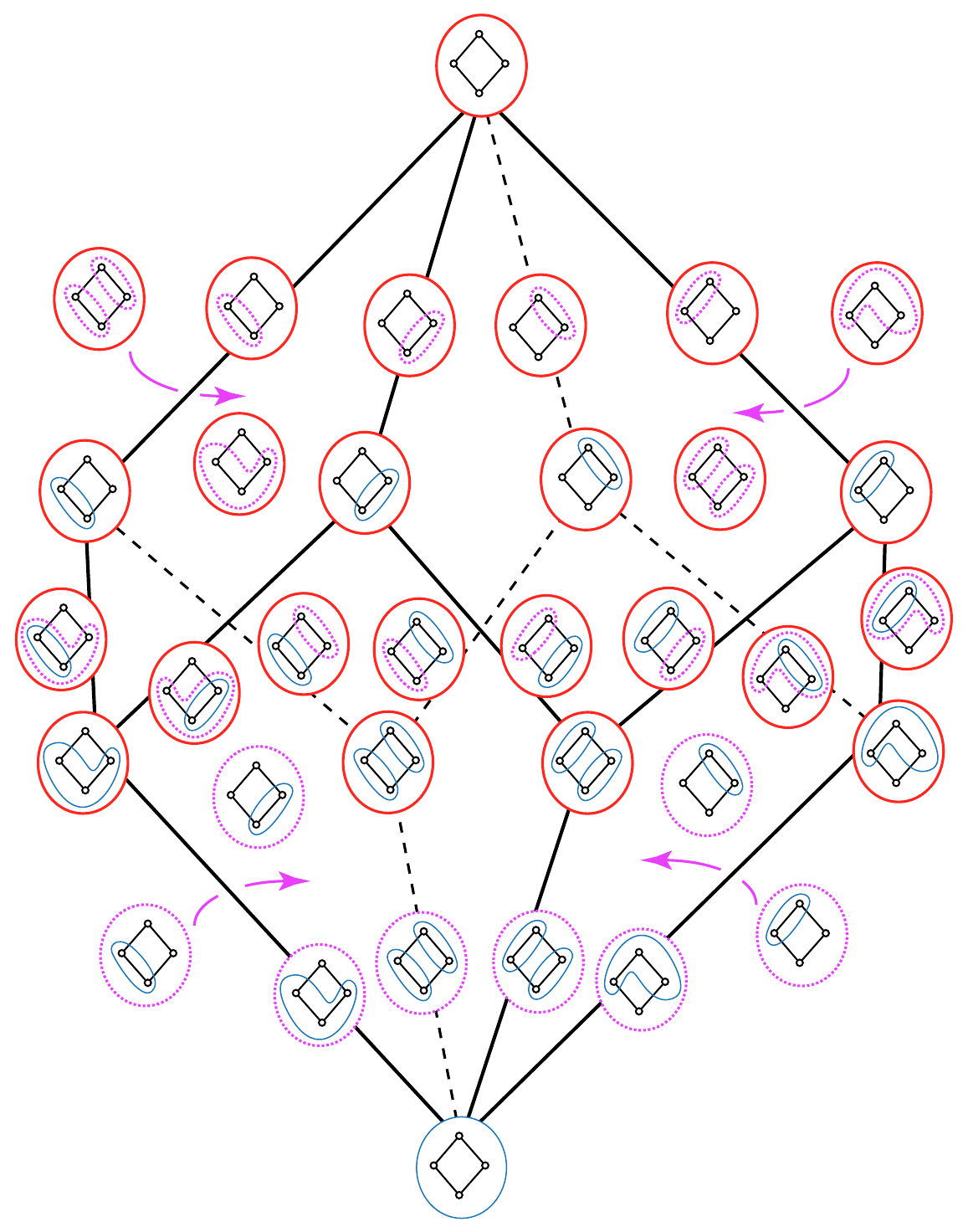}
    \caption{All face labels of the quotient of the diamond poset acyclic multiplihedron, under both equivalences.}
    \label{fig:intervalsj}
\end{figure}

\section{Quotients of the multiplihedra}\label{conjy}

In the original description of the multiplihedron, Stasheff  considered a simplification for the case of a function (a homotopy homeomorphism) from a homotopy associative space, to a strictly associative space. The multiplihedron in this case collapses to become the associahedron. The corresponding equivalence for colored tubings is generated by equating tubings which only differ by the addition of one red tube. For the graph multiplihedron it has been shown that these equivalence classes yield the polytope known as the graph cubeahedron, with faces also described by design tubings. 

In enriched category theory the alternative quotient was also seen to be important: the case in which the homotopy homomorphism has an associative domain and homotopy associative codomain. This corresponds to equivalence for colored tubings generated by equating tubings which only differ by the addition of one blue tube. These \textit{graph composihedra} are described in \cite{dev-forc} as the domain quotients of graph multiplihedra.

Finally, the equivalence relation generated by both kinds of tube addition, for graph multiplihedra, is seen to give the Boolean lattice, and more importantly, the face poset of the hypercubes.  

Here we define the acyclic poset cubeahedra and acyclic poset composihedra as the posets resulting from equivalence relations on the acyclic poset multiplihedra generated by addition  of red and blue pipes respectively.
\begin{defn}
    Given a poset $P$ define the acyclic composihedron   $\mathcal{J}_d(P)$ to be the poset of equivalence classes of acyclic pipings of the Hasse diagram, under the equivalence relation generated by addition of an acyclic blue pipe to a given piping. Define the acyclic design associahedron, or cubeahedron, $\mathcal{J}_r(P)$ to be the poset of equivalence classes of acyclic pipings of the Hasse diagram, under the equivalence relation generated by addition of a red acyclic pipe to a given piping.
\end{defn}

Taken together, the equivalence on maximal pipings (vertices of the acyclic multiplihedron) under both sorts of addition is seen to give a lattice that could be termed the acyclic poset version of the Boolean lattice, or the \textit{acyclic Boolean lattice}. However, we show here that it is already correctly described as the lattice of faces of the order polytope. More importantly, on all pipings, the equivalence under both sorts of addition of tubes is conjectured to give the face lattice of a new polytope that is actually recognizable. It is in fact the interval polytope of the face lattice of the order polytope of the poset. We can show a clear poset bijection, but the realization as a polytope is more difficult at first look.

\begin{theorem}\label{th:two}
    For a poset $P$ the equivalence classes of colored pipings under additions of red or blue pipes form a poset that is isomorphic to the poset of intervals in the face lattice of the order polytope of $P.$
\end{theorem}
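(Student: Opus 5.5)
We will define an explicit map from colored pipings to pairs of acyclic partitions and show that it descends to an isomorphism of the quotient poset onto the interval poset. Given a colored piping $U$ of $P$, consider the outermost red-or-purple level and the blue-or-purple level. Set $\rho(U)$ to be the acyclic partition of $P$ (discarding singletons) given by the maximal pipes of $U$ that are blue or purple, i.e. the blue/purple pipes not contained in any other blue/purple pipe. Set $\beta(U)$ to be the partition given by the minimal red-or-purple pipes together with their complement structure: the red tubes record a coarsening. More precisely, the ``boundary'' between red and blue in the painted Hasse tree of Ardila--Doker is a cut of the tree, and the blue subtrees hanging below that cut are what we will use. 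So the plan is: map $U$ to the pair $(\pi_{\mathrm{low}}(U), \pi_{\mathrm{high}}(U))$, where $\pi_{\mathrm{low}}$ is the partition into maximal blue/purple pipes (the blue region) and $\pi_{\mathrm{high}}$ is the partition into the minimal red/purple pipes, each such pipe regarded as a part, with elements not in any red pipe taken as singletons. Both are acyclic partitions because sets of disjoint pipes in an acyclic piping are acyclic. Since blue sits inside red or purple, every part of $\pi_{\mathrm{low}}$ lies in a part of $\pi_{\mathrm{high}}$, so $\pi_{\mathrm{low}}\le\pi_{\mathrm{high}}$ in refinement and the pair corresponds to an interval $[F_{\mathrm{low}},F_{\mathrm{high}}]$ in the face lattice of $\mathcal{O}(P)$. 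We adjoin the convention that the universal blue tube, or absence of red, gives the top or bottom elements appropriately.

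Invariance under the generating equivalences comes next. Adding a blue pipe inside a blue or purple pipe changes neither the maximal blue/purple pipes nor the red ones, and adding a red pipe inside a red pipe that is not minimal among red/purple pipes, or ``above'' the cut, does not change the minimal red/purple layer. We must check carefully that a red pipe added must avoid creating a new minimal red/purple pipe. Here the definition of a red pipe addition (covering relation 2) places it inside a red pipe, and it can only become minimal if it contains no purple pipe; in that case we instead argue that the equivalence class is generated so that its canonical representative is the one with red and blue tubes collapsed. Concretely, we will exhibit a normal form in each class: a piping consisting only of a single layer of purple pipes (the intersection data) plus blue pipes forming the parts of $\pi_{\mathrm{low}}$ and red pipes forming the parts of $\pi_{\mathrm{high}}$, with no further nesting. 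Blue pipes are maximal blue, red pipes are minimal red, and pipes that are both are purple. We then show every colored piping reduces to this normal form by deleting non-extremal blue and red pipes, which are exactly the inverse generating moves.

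Bijectivity then follows: the normal form is determined by the pair of partitions, and every pair $\pi_{\mathrm{low}}\le\pi_{\mathrm{high}}$ of acyclic partitions arises. For each part of $\pi_{\mathrm{high}}$ we take a red pipe, or a purple one if it also equals a part of $\pi_{\mathrm{low}}$, and for each part of $\pi_{\mathrm{low}}$ we take a blue pipe inside. We must verify that this is a valid colored piping, i.e. that the union is acyclic; this uses that $\pi_{\mathrm{high}}$ and $\pi_{\mathrm{low}}$ are acyclic and the nesting, and that the induced line-graph tubing satisfies the induced-union condition. We also need the universal red/blue conventions to match the empty face and the whole polytope.

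Finally we check order. The covering relations 1--4 of Definition~\ref{multdef} should translate into shrinking the interval: resolving purple to blue coarsens the lower end upward toward the upper, resolving to red refines the upper, and move 4 refines the upper end by splitting. Conversely, each covering of intervals is realized by one such move on normal forms. The main obstacle is this order comparison, specifically that the quotient order, defined as the order generated on classes, is exactly interval containment rather than something weaker. I would handle it by proving that $U\preceq V$ implies $\pi_{\mathrm{low}}(V)\le\pi_{\mathrm{low}}(U)$ and $\pi_{\mathrm{high}}(U)\le\pi_{\mathrm{high}}(V)$, checking the four moves, and then lifting interval inclusions to chains of moves between normal forms.
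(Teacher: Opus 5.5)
\textbf{Verdict: there is a genuine gap.} Your first invariant is right, but the second is not. This breaks the argument at bijectivity, and already at well-definedness.

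\textbf{Why $\pi_{\mathrm{high}}$ fails.} In the quotient, every non-universal red pipe lies inside a red pipe and can be deleted. So nothing built from red pipes other than the universal one survives, and in particular $\pi_{\mathrm{high}}$ is not constant on classes. In the diamond $0<1,2<3$, the piping ``universal red, blue $\{0,1\}$'' has the universal pipe as its only minimal red/purple pipe. The equivalent piping obtained by legally adding a red pipe $\{0,1,2\}$ around $\{0,1\}$ has $\{0,1,2\}$ as its minimal red pipe. For the same reason your normal form, which keeps red pipes as the parts of $\pi_{\mathrm{high}}$, is not canonical.

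\textbf{Where the missing information lives.} The finer endpoint of the interval is recorded by the blue pipes sitting directly inside purple (or red) pipes. You discard exactly these by taking the blue generating move to be covering relation~1, which also allows adding blue inside purple. Concretely, ``universal red, purple $\{0,1,2\}$ containing blue $\{0,1\}$'' and ``universal red, purple $\{0,1,2\}$ containing blue $\{0,2\}$'' have the same pair $(\pi_{\mathrm{low}},\pi_{\mathrm{high}})$. Yet they must correspond to the distinct intervals $[X,Y]$ and $[X,Y']$, where $X=\{\{0,1,2\},\{3\}\}$, $Y=\{\{0,1\},\{2\},\{3\}\}$ and $Y'=\{\{0,2\},\{1\},\{3\}\}$. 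Under your reading of the moves they are even equivalent. With blue-inside-purple allowed, the diamond has only $23$ classes, while the face lattice of the quadrilateral $\mathcal{O}(P)$ has $35$ intervals. So the statement holds only when the blue move is ``blue inside blue''. That is the relation the paper's proof uses, and the one consistent with the stated $f$-vectors $(19,30,13)$ and $(10,16,8)$.

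\textbf{The correct invariants.} The paper works in the opposite direction, sending $[X,Y]$ to a minimal representative $f([X,Y])$. Parts of $X$ broken in $Y$ become purple, unbroken parts become blue, and parts of $Y$ become blue pipes inside the purple ones. The universal pipe is red unless $X$ is the trivial one-part partition $\hat 0$. Reading this backwards gives what you need:
\begin{itemize}
\item Keep your $\pi_{\mathrm{low}}$ (maximal non-red pipes) as $X$. It is the coarser partition, hence the lower endpoint in the reverse-refinement order, contrary to your claim that it refines $\pi_{\mathrm{high}}$.
\item Replace $\pi_{\mathrm{high}}$ by $Y$, the partition into maximal blue pipes, i.e.\ blue pipes whose parent is red or purple.
\end{itemize}
Both are visibly unchanged by adding blue inside blue or red inside red. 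The purple pipes are exactly the parts of $X$ broken in $Y$. The universal color is red, purple, or blue according as $X\neq\hat 0$, $X=\hat 0\neq Y$, or $X=Y=\hat 0$. The covering moves then act as interval shrinkings:
\begin{itemize}
\item adding blue inside purple, or resolving purple to blue, coarsens $Y$;
\item resolving purple to red (moves~3 and~4) refines $X$.
\end{itemize}

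\textbf{What remains.} You still need to check that $f([X,Y])$ is an acyclic piping. Suppose a directed cycle appeared after contracting a mixture of $X$-parts and $Y$-parts. It would project to a closed directed walk in the acyclic quotient by $X$, so it would lie inside one broken part of $X$. That contradicts the acyclicity of $Y$.
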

\begin{proof}

  We describe the bijection $f$ which takes an interval $A$ of acyclic partitions, in the face lattice of the order polytope $\mathcal{O}(P)$, to an equivalence class $f(A)$ of the colored pipings.

   Given a collection $A$ of connected acyclic partitions of the nodes of $P,$ where $A = [X,Y]$ is an interval in the face lattice of $\mathcal{O}(P)$, we create a colored piping from $A$ by: 
  \begin{enumerate}
      \item Discard the singleton parts in any member of $A$; that is ignore them for the purpose of defining the following pipes;
      \item  For the minimal member $X$ of $A$, any non-singleton part of $X$ which is broken into smaller parts in the remainder of the interval becomes a purple pipe of  $f(A).$
      \item For the minimal member $X$ of $A$, any non-singleton part of $X$ which is not broken into smaller parts in the remainder of the interval becomes a blue pipe of  $f(A).$
      \item For the maximal member $Y$ of $A$, any non-singleton part of $Y$ becomes a blue pipe of $f(A).$
      \item If the minimal member $X$ of $A$ is not the minimal element of the lattice of $\mathcal{O}(P)$ (the trivial partition) then the universal pipe of $f(A)$ is red. 
   \end{enumerate}
      The construction of $f(A)$ is seen to give a minimal representative of the equivalence class of colored tubings, from which no red or blue tubes can be removed without changing that class.  The vertices of the interval lattice (faces of the order polytope lattice) are taken to vertices of the quotient lattice of tubings. Also, the ordering relation of the intervals is preserved by our map, since containment of sub intervals becomes resolving purple tubes (when red, the resolved tube is then dropped to reflect the equivalence under addition of red inside of red), or addition of blue tubes inside of purple  (when added outside a blue tube, the inner tube is dropped to reflect the equivalence under addition of blue inside of blue).
\end{proof}

Examples of the mapping from the proof of Theorem~\ref{th:two} can be seen in Figures~\ref{fig:intervalsj} and~\ref{fig:intermap}. The latter figure shows the vertex labels of both polytopes for the diamond poset, in corresponding location. Then  any face shown in Figure~\ref{fig:intervalsj} can be compared with the corresponding face in the right-hand polytope of Figure~\ref{fig:intermap}. The face labels of the interval polytope are simply $[X,Y]$ where $X$ and $Y$ are the minimal and maximal vertices of that face, that is, the minimal and maximal elements of an interval in the face lattice of the order polytope.

When the poset $P$ is acyclic, with a Hasse diagram that is a tree, then the order polytope $\mathcal{O}(P)$ is a simplex. The acyclic Boolean lattice is the Boolean lattice, and the interval polytope of the order polytope is the hypercube. This situation recaptures a special case of quotients of the graph multiplihedron (of the line graph of the Hasse diagram of $P$) as described in the general case in \cite{dev-forc}.

By using the realization of the acyclic poset multiplihedron, there is an obvious way to achieve realizations of the quotients. However it is not just by performing the quotient on the graph multiplihedron and then finding a cross section of that quotient polytope. That is due to the fact that the quotient on the graph multiplihedron is under additions of red and blue tubes regardless of acyclicity. Note for instance that the corresponding cross section of the hypercube does not yield the trapezohedron (the trapezohedron is the correct result for the diamond poset). Rather it is required to follow the other order, first simplify the new realization of the acyclic multiplihedron and then take the quotient. Thus the realizations for these quotients need a dedicated proof, so we conjecture here and leave that proof for future work. 
\begin{figure}
    \centering
    \includegraphics[width=\linewidth]{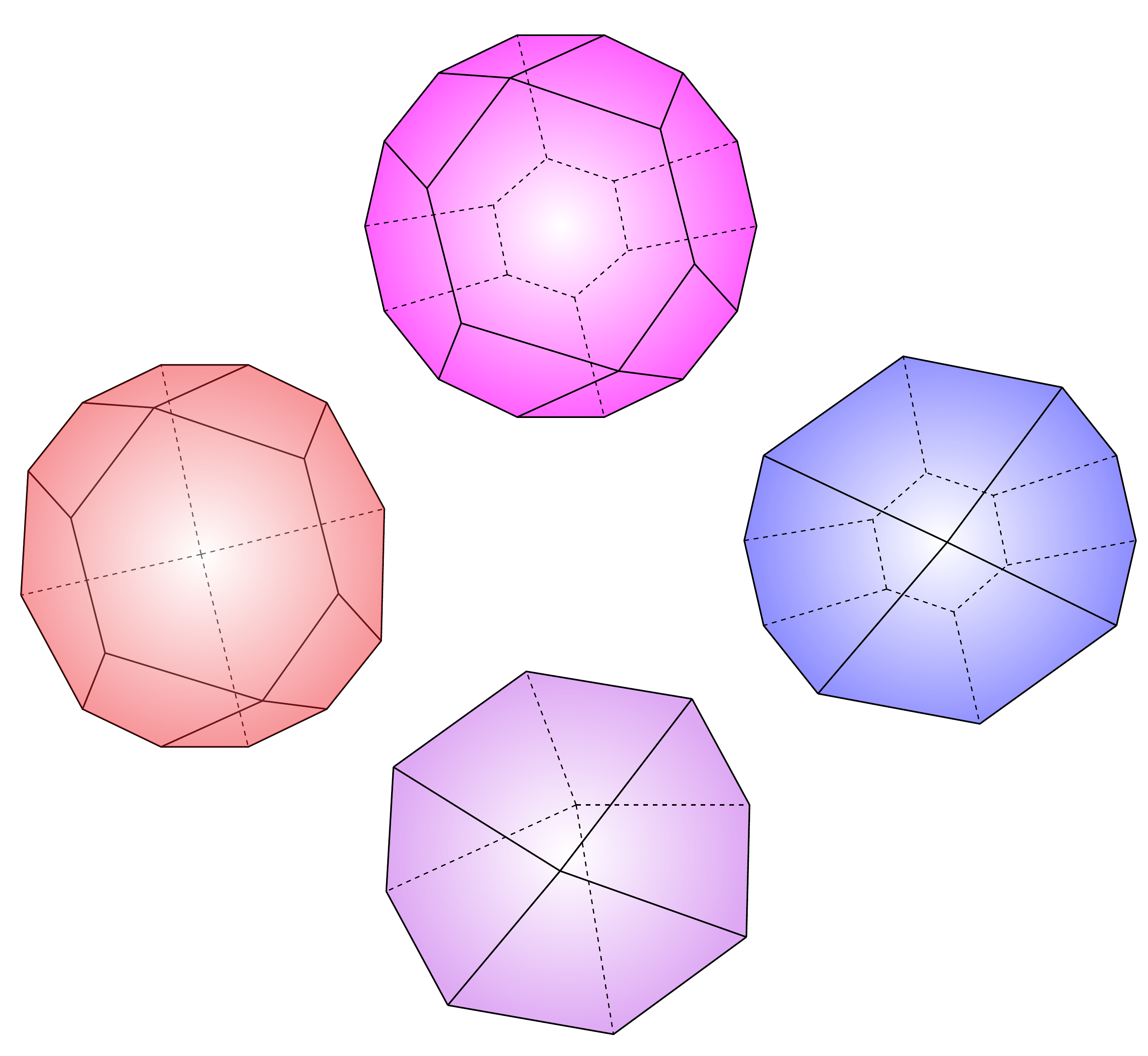}
    \caption{Top: the multiplihedron for the acyclonestohedron of the four-element diamond poset. On the left is the domain quotient (acyclic composihedron), on the right the range quotient (acyclic design associahedron, or acyclic cubeahedron), and at the bottom is the interval polytope of the order polytope of the diamond. Note that this interval polytope of the order polytope is a trapezohedron, with a side view in Figure~\ref{fig:intervalsj}. It turns out to be the same polytope as for the bowtie poset, as seen in Figure~\ref{fig:topp}. }
    \label{fig:quotientdiam}
\end{figure}

\begin{conjecture}
Given equivalence relations on colored pipings of a poset $P$, generated by 1) addition of blue pipes, 2) addition of red pipes, and 3) generated by both additions; the resulting posets are realized as polytopes found by collapsing faces of the acyclic multiplihedron. In the third case, with both equivalences, the result is the interval polytope of the order polytope of $P.$
\end{conjecture} 

To realize each quotient, experimentally it appears to give the correct result if we remove from the realization of the full acyclic poset multiplihedron the facet inequalities which correspond to facets now equivalent to lower dimensional faces by addition of a single red or blue acyclic tube. At the same time the remaining facet inequalities must be extended, with their constant set equal to the max power of 3 for single red tubes, or equal to 1 for single blue tubes.

Table~\ref{tab:quot}, first column, shows the realization for the acyclic cubeahedron (range quotient, under addition of red tubes), or acyclic design associahedron, of the diamond poset. Compare to Table~\ref{tab:diam}, where five upper facets have been removed corresponding to the first column of Figure~\ref{fig:equivd}, and the remaining four upper facets have had their constant maximized accordingly. The acyclic design associahedron (cubeahedron) has $f$-vector (17, 26, 11) as seen on the right of Figure~\ref{fig:quotientdiam}.\\


\begin{table}[h!]
    \centering
    \begin{tabular}{| c | c | c |}
\hline
acyclic & acyclic & order \\
 cubeahedron & composihedron &  interval polytope \\ \hline
    $x_a \ge 1$  &  $x_a+x_b \le 72$  &  $x_a+x_b \le 81$ \\
 $x_b \ge 1$   &  $x_c+x_d \le 72$  &  $x_c+x_d \le 81$ \\
  $x_c \ge 1$ &   $x_a+x_d \le 75$  &  $x_a+x_d \le 81$ \\
   $x_d \ge 1$    &    $x_b+x_c \le 75$  &  $x_b+x_c \le 81$ \\
   $x_a+x_b \ge 3$    &  $x_a+x_b+x_c \le 78$ &  $x_a \ge 1$ \\
   $x_c+x_d \ge 3$  &   $x_b+x_c+x_d \le 78$  & $x_b \ge 1$  \\
$x_a+x_b+x_c+x_d \ge 27$  & $x_a+x_b+x_d \le 78$ & $x_c \ge 1$ \\ 
 $x_a+x_b \le 81$   & $x_a+x_c+x_d \le 78$ & $x_d \ge 1$ \\
$x_c+x_d \le 81$  &  $x_a+x_b+x_c+x_d \le 81$ &  \\
 $x_a+x_d \le 81$  & $x_a \ge 1$ & \\
  $x_b+x_c \le 81$  &  $x_b \ge 1$ & \\
  & $x_c \ge 1$ & \\
  & $x_d \ge 1$ & \\
\underline{~~~~~~~cycle equality~~~~~~~} &\underline{~~~~~~~~~~~~~~~~~~~~~~~~~~~~~~~~~~~}  & \underline{~~~~~~~~~~~~~~~~~~~~~~~~~~~~~~~~~~~} \\ 
$x_a-x_b+x_c-x_d = 0$ &$x_a-x_b+x_c-x_d = 0$& $x_a-x_b+x_c-x_d = 0$\\
&& \\
\hline
\end{tabular}
    \caption{The realizations for the diamond poset acyclic cubeahedron, composihedron, and interval polytope using variables $(x_a,x_b,x_c,x_d)$ with subscripts from the edges of the diagram in Figure~\ref{fig:diamondmulti}. Compare to Table~\ref{tab:diam}, where certain facets have been removed or maximized according to the equivalences in Figure~\ref{fig:equivd}. An image of all the polytopes is in Figure~\ref{fig:quotientdiam}, and the realizations are pictured in Figure~\ref{fig:quotpic}.}
    \label{tab:quot}
\end{table}

Table~\ref{tab:quot}, second column, shows the realization  
 for the acyclic composihedron, or domain quotient (under addition of blue tubes), of the diamond poset. Compare to Table~\ref{tab:diam}, where three lower facets have been removed according to the first column of Figure~\ref{fig:equivd}. The only lower facets remaining already had their constant minimized. The acyclic composihedron has $f$-vector (19, 30, 13) as seen on the left of Figure~\ref{fig:quotientdiam}.


Table~\ref{tab:quot}, third column, shows the realization  
 for  the acyclic interval polytope, the simultaneous domain and range quotient for the diamond poset, using variables $(x_a,x_b,x_c,x_d)$ with subscripts from the edges of the diagram in Figure~\ref{fig:diamondmulti}.
The acyclic interval polytope has $f$-vector (10, 16, 8) as seen on the bottom of Figure~\ref{fig:quotientdiam}.


\begin{figure}
    \centering
    \includegraphics[width=0.4\linewidth]{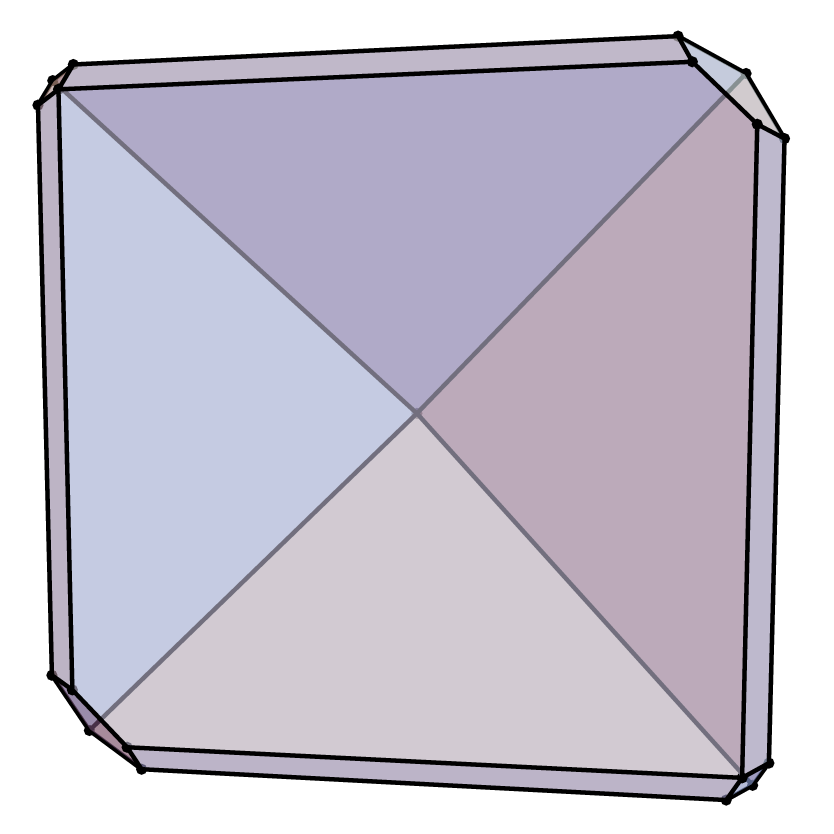}
    \includegraphics[width=0.4\linewidth]{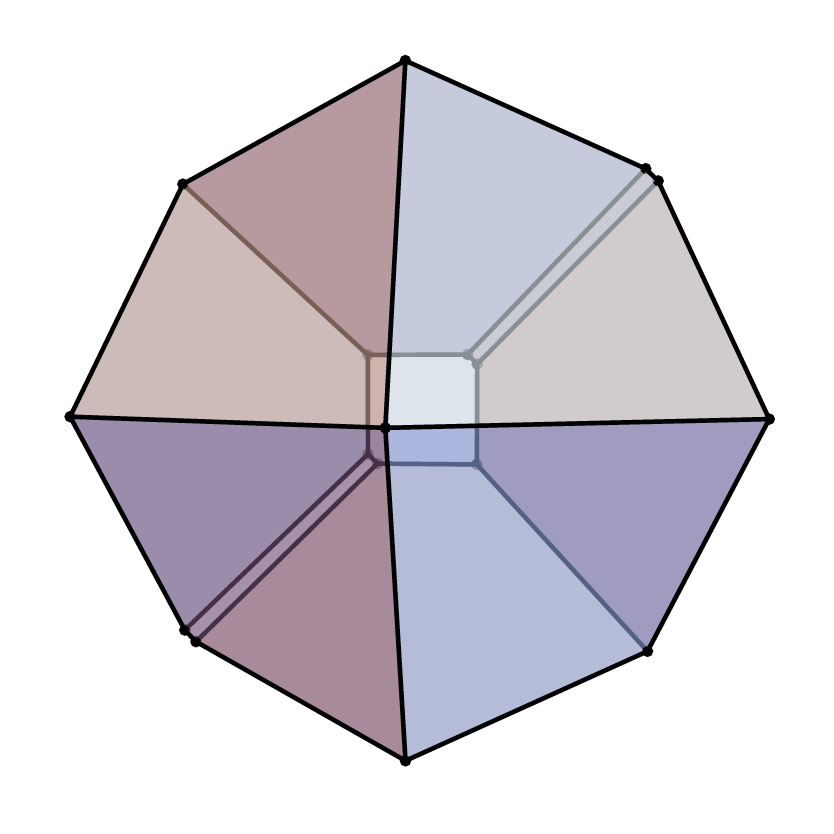}
    \caption{Results of the realization of the acyclic composihedron and cubeahedron (acyclic design associahedron) for the diamond poset, as given in Table~\ref{tab:quot}. }
    \label{fig:quotpic}
\end{figure}

\section*{Acknowledgments}
Thanks to Andrew Sack and to Vincent Pilaud for helpful conversations and encouragement. 

\bibliographystyle{plain}
\bibliography{biblioonew}

@article {forcey,
    AUTHOR = {Forcey, Stefan},
     TITLE = {Convex hull realizations of the multiplihedra},
   JOURNAL = {Topology Appl.},
  FJOURNAL = {Topology and its Applications},
    VOLUME = {156},
      YEAR = {2008},
    NUMBER = {2},
     PAGES = {326--347},
      ISSN = {0166-8641},
   MRCLASS = {52B12 (05A15 18D50 52B70 55P45 55P48)},
  MRNUMBER = {2475119},
MRREVIEWER = {Wolfgang K\"{u}hnel},
       DOI = {10.1016/j.topol.2008.07.010},
       URL = {https://doi.org/10.1016/j.topol.2008.07.010},
}

@article {dev-forc,
    AUTHOR = {Devadoss, Satyan and Forcey, Stefan},
     TITLE = {Marked tubes and the graph multiplihedron},
   JOURNAL = {Algebr. Geom. Topol.},
  FJOURNAL = {Algebraic \& Geometric Topology},
    VOLUME = {8},
      YEAR = {2008},
    NUMBER = {4},
     PAGES = {2081--2108},
      ISSN = {1472-2747},
   MRCLASS = {52B11 (18D50 55P48)},
  MRNUMBER = {2460880},
       DOI = {10.2140/agt.2008.8.2081},
       URL = {https://doi.org/10.2140/agt.2008.8.2081},
}

@article {stasheff,
    AUTHOR = {Stasheff, James Dillon},
     TITLE = {Homotopy associativity of {$H$}-spaces. {I}, {II}},
      NOTE = {{{\bf{1}}08} (1963), 275-292; ibid},
   JOURNAL = {Trans. Amer. Math. Soc.},
  FJOURNAL = {Transactions of the American Mathematical Society},
    VOLUME = {108},
      YEAR = {1963},
     PAGES = {293--312},
      ISSN = {0002-9947},
   MRCLASS = {55.40},
  MRNUMBER = {158400},
MRREVIEWER = {A. H. Clark},
       DOI = {10.1090/s0002-9947-1963-0158400-5},
       URL = {https://doi.org/10.1090/s0002-9947-1963-0158400-5},
}

@article {forc-glew,
    AUTHOR = {Forcey, Stefan and Glew, Ross and Kim, Hyungrok},
     TITLE = {Generalised wavefunction coefficients and
              acyclonesto-cosmohedra},
   JOURNAL = {J. Phys. A},
  FJOURNAL = {Journal of Physics. A. Mathematical and Theoretical},
    VOLUME = {58},
      YEAR = {2025},
    NUMBER = {46},
     PAGES = {Paper No. 465403, 27},
      ISSN = {1751-8113,1751-8121},
      ISBN = {},
   MRCLASS = {52B05 (81U20 85A40)},
  MRNUMBER = {4993140},
       DOI = {10.1088/1751-8121/ae1b4e},
       URL = {https://doi.org/10.1088/1751-8121/ae1b4e},
}

@misc{ardilacosmo,
      title={Combinatorics of the Cosmohedron}, 
      author={Federico Ardila-Mantilla and Nima Arkani-Hamed and Carolina Figueiredo and Francisco Vazão},
      year={2026},
      eprint={2603.03425},
      archivePrefix={arXiv},
      primaryClass={math.CO},
      url={https://arxiv.org/abs/2603.03425}, 
}

@article {multiassoc,
    AUTHOR = {Pilaud, Vincent and Santos, Francisco},
     TITLE = {Multitriangulations as complexes of star polygons},
   JOURNAL = {Discrete Comput. Geom.},
  FJOURNAL = {Discrete \& Computational Geometry. An International Journal
              of Mathematics and Computer Science},
    VOLUME = {41},
      YEAR = {2009},
    NUMBER = {2},
     PAGES = {284--317},
      ISSN = {0179-5376,1432-0444},
   MRCLASS = {52A30 (05C10)},
  MRNUMBER = {2471876},
MRREVIEWER = {Mathieu\ Dutour Sikiri\'{c}},
       DOI = {10.1007/s00454-008-9078-6},
       URL = {https://doi.org/10.1007/s00454-008-9078-6},
}

@misc{bottc,
      title={Constrainahedra}, 
      author={Nathaniel Bottman and Daria Poliakova},
      year={2022},
      eprint={2208.14529},
      archivePrefix={arXiv},
      primaryClass={math.CO},
      url={https://arxiv.org/abs/2208.14529}, 
}

@article {berg,
    AUTHOR = {Benedetti, Carolina and Bergeron, Nantel and Machacek, John},
     TITLE = {Hypergraphic polytopes: combinatorial properties and antipode},
   JOURNAL = {J. Comb.},
  FJOURNAL = {Journal of Combinatorics},
    VOLUME = {10},
      YEAR = {2019},
    NUMBER = {3},
     PAGES = {515--544},
      ISSN = {2156-3527,2150-959X},
   MRCLASS = {05E10 (05C65 52B05)},
  MRNUMBER = {3960512},
       DOI = {10.4310/JOC.2019.v10.n3.a4},
       URL = {https://doi.org/10.4310/JOC.2019.v10.n3.a4},
}

@misc{bottn,
      title={Higher-Categorical Associahedra}, 
      author={Spencer Backman and Nathaniel Bottman and Daria Poliakova},
      year={2024},
      eprint={2409.03633},
      archivePrefix={arXiv},
      primaryClass={math.CO},
      url={https://arxiv.org/abs/2409.03633}, 
}

@Article{dfrs,
 Author = {Devadoss, Satyan Linus and Forcey, Stefan and Reisdorf, Stephen and Showers, Patrick},
 Title = {Convex polytopes from nested posets},
 Journal = {European Journal of Combinatorics},
 ISSN = {0195-6698},
 Volume = {43},
 Pages = {229--248},
 Year = {2015},
 month = jan,
 Language = {English},
 DOI = {10.1016/j.ejc.2014.08.018},
 zbMATH = {6352229},
 Zbl = {1301.05231},
 eprint = { 1306.4208},
 primaryClass = "math",
}

@misc{defant2024operahedronlattices,
      title={Operahedron Lattices}, 
      author={Colin Defant and Andrew Sack},
      year={2024},
      month = feb,
      eprint={2402.12717},
      archivePrefix={arXiv},
      primaryClass={math.CO},
      doi={10.48550/arXiv.2402.12717}, 
}

@article{Glew:2025otn,
    author = "Glew, Ross and {\L{}}ukowski, Tomasz",
    title = "Amplitubes: Graph Cosmohedra",
    eprint = "2502.17564",
    archivePrefix = "arXiv",
    primaryClass = "hep-th",
    year = "2025",
    doi={10.48550/arXiv.2502.17564},
    journal = "Journal of High Energy Physics",
}

@misc{Arkani-Hamed:2024jbp,
    author = "Arkani-Hamed, Nima and Figueiredo, Carolina and Vazão, Francisco",
    title = "{Cosmohedra}",
    eprint = "2412.19881",
    archivePrefix = "arXiv",
    primaryClass = "hep-th",
    month = dec,
    year = "2024",
    doi = {10.48550/arXiv.2412.19881}
}

@article{galashin,
 author = {Galashin{~(\cyrillic{Павел Галашин})}, Pavel},
 title = {{{\(P\)}}-associahedra},
 journal = {Selecta Mathematica, New Series},
 issn = {1022-1824},
 volume = {30},
 number = {1},
 pages = 6,
 year = {2024},
 language = {English},
 doi = {10.1007/s00029-023-00896-1},
 zbMATH = {7782622},
 Zbl = {1529.52012},
 archivePrefix = {arXiv},
       eprint = {2110.07257},
 primaryClass = {math.CO},

}

@ARTICLE{sack,
       author = {Sack, Andrew},
        title = {A realization of poset associahedra},
          doi = {10.48550/arXiv.2301.11449},
archivePrefix = {arXiv},
       eprint = {2301.11449},
 primaryClass = {math.CO},
    JOURNAL = {\foreignlanguage{french}{Séminaire Lotharingien de Combinatoire}},
    VOLUME = {89B},
      YEAR = {2023},
     PAGES = {24},
      ISSN = {1286-4889},
      month = apr,
}

@article {mpp,
    AUTHOR = {Mantovani, Chiara and Padrol, Arnau and Pilaud, Vincent},
     TITLE = {Acyclonestohedra},
   JOURNAL = {\foreignlanguage{french}{Séminaire Lotharingien de Combinatoire}},
    VOLUME = {91B},
      YEAR = {2024},
     PAGES = {28},
      ISSN = {1286-4889},
   MRCLASS = {52B05 (05B35)},
  MRNUMBER = {4818660},
  url = {https://www.mat.univie.ac.at/~slc/wpapers/FPSAC2024/28.pdf},
  month = apr,
}

@article{stanley,
 author = {Stanley, Richard Peter},
 title = {Two poset polytopes},
 journal = {Discrete \& Computational Geometry},
 issn = {0179-5376},
 volume = {1},
 number = 1,
 pages = {9--23},
 year = {1986},
 month = mar,
 language = {English},
 doi = {10.1007/BF02187680},
 zbMATH = {3958127},
 Zbl = {0595.52008}
}

@article{laplante-anfossi,
 author = {Laplante-Anfossi, Guillaume},
 title = {The diagonal of the operahedra},
 journal = {Advances in Mathematics},
 issn = {0001-8708},
 volume = {405},
 month = aug,
 pages = {108494},
 year = {2022},
 language = {English},
 doi = {10.1016/j.aim.2022.108494},
 zbMATH = {7557158},
 Zbl = {1498.52021},
 eprint={2110.14062},
}

@article{cd,
 author = {Carr, Michael and Devadoss, Satyan Linus},
 title = {Coxeter complexes and graph-associahedra},
 journal = {Topology and its Applications},
 issn = {0166-8641},
 volume = {153},
 number = {12},
 pages = {2155--2168},
 year = {2006},
 language = {English},
 doi = {10.1016/j.topol.2005.08.010},
 zbMATH = {5044164},
 Zbl = {1099.52001}
}

@article{postnikov,
 author = {Postnikov{~(\cyrillic{Александр Евгеньевич Постников})}, Alexander Evgenievich},
 title = {Permutohedra, associahedra, and beyond},
 journal = {International Mathematics Research Notices},
 issn = {1073-7928},
 volume = {2009},
 number = {6},
 pages = {1026--1106},
 year = {2009},
 language = {English},
 doi = {10.1093/imrn/rnn153},
 zbMATH = {5542016},
 Zbl = {1162.52007},
 eprint = {math/0507163},
}

@misc{pilaud-sack,
      title={Interval hypergraphic polytopes (or deformed associahedra), Tamari interval posets, and weeping willows}, 
      author={Jose Bastidas and Félix Gélinas and Vincent Pilaud and Germain Poullot and Andrew Sack and Eleni Tzanaki},
      year={2026},
      eprint={2606.18376},
      archivePrefix={arXiv},
      primaryClass={math.CO},
      url={https://arxiv.org/abs/2606.18376}, 
}

@misc{mppfull,
    AUTHOR = {Mantovani, Chiara and Padrol, Arnau and Pilaud, Vincent},
     TITLE = {Facial Nested Complexes and Acyclonestohedra},
      year = {2025},
      eprint={2509.15914},
      archivePrefix={arXiv},
      primaryClass={math.CO},
}

@article {pilaud_berg,
    AUTHOR = {Bergeron, Nantel and Pilaud, Vincent},
     TITLE = {Interval hypergraphic lattices},
   JOURNAL = {European J. Combin.},
  FJOURNAL = {European Journal of Combinatorics},
    VOLUME = {132},
      YEAR = {2026},
    NUMBER = {part B},
     PAGES = {Paper No. 104285, 33},
      ISSN = {0195-6698,1095-9971},
   MRCLASS = {05C65 (05C62 06B05)},
  MRNUMBER = {4993517},
       DOI = {10.1016/j.ejc.2025.104285},
       URL = {https://doi.org/10.1016/j.ejc.2025.104285},
}

@article {doker,
    AUTHOR = {Ardila, Federico and Doker, Jeffrey},
     TITLE = {Lifted generalized permutahedra and composition polynomials},
   JOURNAL = {Adv. in Appl. Math.},
  FJOURNAL = {Advances in Applied Mathematics},
    VOLUME = {50},
      YEAR = {2013},
    NUMBER = {4},
     PAGES = {607--633},
      ISSN = {0196-8858,1090-2074},
   MRCLASS = {52B05 (05A18 05C05 52B11)},
  MRNUMBER = {3032308},
MRREVIEWER = {Ruriko\ Yoshida},
       DOI = {10.1016/j.aam.2013.01.005},
       URL = {https://doi.org/10.1016/j.aam.2013.01.005},
}

@article {bottman,
    AUTHOR = {Bottman, Nathaniel},
     TITLE = {2-associahedra},
   JOURNAL = {Algebr. Geom. Topol.},
  FJOURNAL = {Algebraic \& Geometric Topology},
    VOLUME = {19},
      YEAR = {2019},
    NUMBER = {2},
     PAGES = {743--806},
      ISSN = {1472-2747,1472-2739},
   MRCLASS = {53D37 (52B12)},
  MRNUMBER = {3924177},
       DOI = {10.2140/agt.2019.19.743},
       URL = {https://doi.org/10.2140/agt.2019.19.743},
}

@article {shuffle,
    AUTHOR = {Chapoton, Fr\'{e}d\'{e}ric and Pilaud, Vincent},
     TITLE = {Shuffles of deformed permutahedra, multiplihedra,
              constrainahedra, and biassociahedra},
   JOURNAL = {Ann. H. Lebesgue},
  FJOURNAL = {Annales Henri Lebesgue},
    VOLUME = {7},
      YEAR = {2024},
     PAGES = {1535--1601},
      ISSN = {2644-9463},
   MRCLASS = {52B05 (05A15 05E99 06B99 52B12)},
  MRNUMBER = {4883875},
MRREVIEWER = {Joseph\ Kung},
       DOI = {10.5802/ahl.225},
       URL = {https://doi.org/10.5802/ahl.225},
}

@article {quotient,
    AUTHOR = {Pilaud, Vincent and Santos, Francisco},
     TITLE = {Quotientopes},
   JOURNAL = {Bull. Lond. Math. Soc.},
  FJOURNAL = {Bulletin of the London Mathematical Society},
    VOLUME = {51},
      YEAR = {2019},
    NUMBER = {3},
     PAGES = {406--420},
      ISSN = {0024-6093,1469-2120},
   MRCLASS = {52B05 (06B10 52B12)},
  MRNUMBER = {3964495},
MRREVIEWER = {Egon\ Schulte},
       DOI = {10.1112/blms.12231},
       URL = {https://doi.org/10.1112/blms.12231},
}

\end{document}